\theoremstyle{plain}
\newtheorem{thm}{\protect\theoremname}[section]
  \theoremstyle{remark}
  \newtheorem{rem}[thm]{\protect\remarkname}
  \theoremstyle{definition}
  \newtheorem{defn}[thm]{\protect\definitionname}
  \theoremstyle{plain}
  \newtheorem{lem}[thm]{\protect\lemmaname}
  \theoremstyle{definition}
  \newtheorem{example}[thm]{\protect\examplename}
\theoremstyle{definition}
\newtheorem*{hyp}{Hypotheses}
\newenvironment{keywords}{ \noindent\footnotesize\textbf{Keywords and phrases:}}{}
\newenvironment{class}{\noindent\footnotesize\textbf{Mathematics subject classification 2010:}}{}
\newcommand*{\dive}{\operatorname{div}}
\newcommand*{\grad}{\operatorname{grad}}
\newcommand*{\spt}{\operatorname{spt}}
\renewcommand*{\i}{\mathrm{i}}
\DeclareMathAccent{\Circ}{\mathalpha}{operators}{"17}
\newcommand{\interior}[1]{\Circ{#1}}
\renewcommand{\Im}{\operatorname{\mathfrak{Im}}}
\renewcommand{\Re}{\operatorname{\mathfrak{Re}}}
\renewcommand{\hat}{\widehat}
\renewcommand{\tilde}{\widetilde}
\renewcommand*{\epsilon}{\varepsilon}
\renewcommand*{\rho}{\varrho}
  \providecommand{\definitionname}{Definition}
  \providecommand{\examplename}{Example}
  \providecommand{\lemmaname}{Lemma}
  \providecommand{\remarkname}{Remark}
\providecommand{\theoremname}{Theorem}
\begin{document}
% \makepreprinttitlepage

\author{ Sascha Trostorff \\ Institut f\"ur Analysis, Fachrichtung Mathematik\\ Technische Universit\"at Dresden\\ Germany\\ sascha.trostorff@tu-dresden.de}

\title{Exponential Stability for Linear Evolutionary Equations.}

\maketitle
\begin{abstract} \textbf{Abstract.} We give an approach to exponential
stability within the framework of evolutionary equations due to {[}R.
Picard. A structural observation for linear material laws in classical
mathematical physics. Math. Methods Appl. Sci., 32(14):1768\textendash{}1803,
2009{]}. We derive sufficient conditions for exponential stability
in terms of the material law operator which is defined via an analytic
and bounded operator-valued function and give an estimate for the
expected decay rate. The results are illustrated by three examples:
differential-algebraic equations, partial differential equations with
finite delay and parabolic integro-differential equations.\end{abstract}

\begin{keywords} exponential stability, evolutionary equations, causality,
differential-algebraic equations, delay-equations, integro-differential
equations \end{keywords}

\begin{class} 35B35 (Stability); 35B40 (Asymptotic behavior of solutions);
47N20 (Applications to differential and integral equations); 35F16
(Initial-boundary value problems for linear first-order equations)
 \end{class}

\newpage

\tableofcontents{} 

\newpage

\section{Introduction}

The article gives an approach to the exponential stability of equations
of the form 
\begin{equation}
\partial_{0}V+AU=F,\label{eq:1}
\end{equation}
where we denote the derivative with respect to the temporal variable
by $\partial_{0}$. The (unbounded) linear operator $A$, acting on
some Hilbert space, is assumed to be maximal monotone and we refer
to the monographs \cite{Brezis,Morosanu} for the topic of monotone
operators on Hilbert spaces. Equation \prettyref{eq:1} is completed
by a constitutive relation of the form 
\[
V=\mathcal{M}U,
\]
where $\mathcal{M}$ is a bounded linear operator acting in time and
space. Thus, we end up with an equation of the form 
\[
\left(\partial_{0}\mathcal{M}+A\right)U=F,
\]
which we refer to as an \emph{evolutionary equation. }This class of
problems was introduced by Picard in \cite{Picard}, where $A$ was
assumed to be skew-selfadjoint, and it was illustrated that many equations
of classical mathematical physics are covered by this abstract class.
The well-posedness for such problems is proved, by showing that the
operator $\partial_{0}\mathcal{M}+A$ is boundedly invertible in a
suitable Hilbert space. More precisely, the derivative $\partial_{0}$
is established as a normal, continuously invertible operator in an
exponentially weighted $L_{2}$-space and $\mathcal{M}$ is defined
as a function of $\partial_{0}^{-1}$ (see Section 2) and the well-posedness
is shown under a positive definiteness constraint on the operator
$\mathcal{M}$ (see \cite[Solution Theory]{Picard} and \prettyref{thm:sol_theo}
of this article). Moreover, the question of causality, which can be
seen as a characterizing property for evolutionary processes, was
addressed which leads to additional constraints on the operator $\mathcal{M},$
namely that $\mathcal{M}$ is defined via the Fourier-Laplace transformation
of an analytic and bounded function $M:B_{\mathbb{C}}(r,r)\to L(H)$
(for more details see Section 2). Especially the analyticity of $M$
is crucial for the causality, due to the correlation of supports of
$L_{2}$-functions and the analyticity of their Laplace transforms
by the Paley-Wiener Theorem (see \cite[Theorem 19.2]{rudin1987real}).
Later on these results were generalized to the case of $A$ being
a maximal monotone relation in \cite{Trostorff2012_NA,Trostorff2012_nonlin_bd}. 

In this work we give sufficient criteria for the exponential stability
of the evolutionary problem in terms of the function $M$. The study
of stability issues for differential equations, which goes back to
Lyapunov (see \cite{ljapunov1992general} for a survey), has become
a very active field of research for many decades and there exist numerous
works dealing with this topic. We just like to mention some standard
approaches to exponential stability. The first strategy goes back
to Lyapunov. The aim is to find a suitable function (a so-called Lyapunov
function) yielding a certain differential inequality which allows
to derive statements about the asymptotic behavior of solutions of
the differential equation.  A second approach, which applies to linear
differential equations, is based on the theory of semigroups. In this
framework different criteria for exponential stability were derived
in terms of the semigroup or its generator, e.g. Datko's Lemma (\cite{Datko_1970}
or \cite[p. 300]{engel2000one}), Gearharts Theorem (\cite{Gearhart_1978}
or \cite[p. 302]{engel2000one}) or the Spectral Mapping Theorem (see
\cite[p. 302, Theorem 1.10]{engel2000one}). A third approach uses
the Fourier or the Laplace transform of a solution to derive statements
of their asymptotics. These methods are sometimes referred to as Frequency
Domain Methods. In our framework it seems to be appropriate to employ
the last approach, since, by the definition of $\mathcal{M}$, methods
of vector-valued complex analysis are at hand through the Fourier-Laplace
transformation. Note that, due to the general structure of evolutionary
equations, the results apply to a broad class of differential equations,
such as differential-algebraic equations, equations with memory effects
or integro-differential equations, where semigroup methods may be
difficult to apply. 

The article is structured as follows. In Section 2 we recall the framework
of evolutionary equations and its solution theory (\prettyref{thm:sol_theo}).
Section 3 provides an abstract condition for exponential stability
for evolutionary equations in terms of the function $M$ and the proof
of the main stability result (\prettyref{thm:stability}). To illustrate
the versatility of the previous results, we discuss several examples
in Section 4. We begin with studying differential-algebraic equations
of mixed type and derive a condition for their exponential stability.
Moreover, we give a concrete example for such an equation, which seems
hard to be tackled by other approaches, since the type of the differential
equation switches on different parts of the underlying domain. Furthermore,
we give a possible approach of how to deal with initial value problems.
In Subsection 4.2 we consider an example of a partial differential
equation with memory effect. A similar problem was also treated by
Batkai and Piazerra in \cite{Batkai_2005}, using a semigroup approach.
In contrast to their result our approach directly extends to the case
of differential-algebraic equations with delay. We conclude the article
by discussing a parabolic integro-differential equation with an operator-valued
kernel, where we adopt the ideas of \cite{Trostorff2012_integro}
to derive the exponential stability. 

Throughout let $H$ be a complex Hilbert space. We denote its inner
product by $\langle\cdot|\cdot\rangle$ which is assumed to be linear
in the second and conjugate linear in the first argument. We denote
its induced norm by $|\cdot|.$

\section{The framework of evolutionary equations}

We recall some basic notions and results on linear evolutionary equations,
i.e. equations of the form
\begin{equation}
\left(\partial_{0,\rho}M(\partial_{0,\rho}^{-1})+A\right)u=f,\label{eq:evo}
\end{equation}

where $A:D(A)\subseteq H\to H$ is a linear, maximal monotone operator,
$\partial_{0,\rho}$ denotes the time-derivative, established in a
suitable Hilbert space and $M(\partial_{0,\rho}^{-1})$ is a bounded
linear operator in time and space, a so-called \emph{linear material
law.} We refer to \cite{Picard,Picard_McGhee,Kalauch2011,Trostorff2012_NA,Trostorff2012_nonlin_bd}
for more details and proofs of the following statements. First we
begin by introducing the Hilbert space setting, where we want to consider
equation \prettyref{eq:evo}.\\
For $\rho\in\mathbb{R}$ we define the space $H_{\rho,0}(\mathbb{R};H)$
as the space of (equivalence classes of) measurable function $f:\mathbb{R}\to H$
which are square-integrable with respect to the exponentially weighted
Lebesgue measure $e^{-2\rho t}\mbox{ d}t,$ equipped with the inner
product 
\[
\langle f|g\rangle_{H_{\rho,0}(\mathbb{R};H)}\coloneqq\intop_{\mathbb{R}}\langle f(t)|g(t)\rangle e^{-2\rho t}\mbox{ d}t.
\]
Note that $H_{0,0}(\mathbb{R};H)$ is just the space $L_{2}(\mathbb{R};H).$
We define the derivative $\partial_{0,\rho}$ as the closure of the
operator 
\begin{align*}
\partial_{0,\rho}|_{C_{c}^{\infty}(\mathbb{R};H)}:C_{c}^{\infty}(\mathbb{R};H)\subseteq H_{\rho,0}(\mathbb{R};H) & \to H_{\rho,0}(\mathbb{R};H)\\
\phi & \mapsto\phi',
\end{align*}

where we denote by $C_{c}^{\infty}(\mathbb{R};H)$ the space of function
$\phi:\mathbb{R}\to H$ which are arbitrarily often differentiable
and have compact support. 
\begin{rem}
\label{rem:partial_0}$\,$

\begin{enumerate}[(a)]

\item For each $\rho\in\mathbb{R}$ the operator $\partial_{0,\rho}$
is normal with $\Re\partial_{0,\rho}=\frac{1}{2}\left(\overline{\partial_{0,\rho}+\partial_{0,\rho}^{\ast}}\right)=\rho.$
Moreover we obtain that $\partial_{0,\rho}^{\ast}=-\partial_{0,\rho}+2\rho.$
In particular, for $\rho=0$ the operator $\partial_{0,0}$ is skew-selfadjoint
and coincides with the usual weak derivative on $L_{2}(\mathbb{R};H)$
with domain $H_{0,1}(\mathbb{R};H)=H^{1}(\mathbb{R};H)=W_{2}^{1}(\mathbb{R};H)$.

\item For $\rho\ne0$ the operators $\partial_{0,\rho}$ on $H_{\rho,0}(\mathbb{R};H)$
and $\partial_{0,0}+\rho$ on $L_{2}(\mathbb{R};H)$ are unitarily
equivalent and $\partial_{0,\rho}$ is boundedly invertible with $\|\partial_{0,\rho}^{-1}\|=\frac{1}{|\rho|}.$
For $\rho=0$ the operator $\partial_{0,0}$ is not boundedly invertible,
since $0$ lies in its continuous spectrum. 

\item For $\rho\ne0$ the inverse operator $\partial_{0,\rho}^{-1}$
is given by 
\[
\left(\partial_{0,\rho}^{-1}u\right)(t)=\begin{cases}
\intop_{-\infty}^{t}u(s)\mbox{ d}s & \mbox{ if }\rho>0,\\
-\intop_{t}^{\infty}u(s)\mbox{ d}s & \mbox{ if }\rho<0
\end{cases}
\]
for $u\in H_{\rho,0}(\mathbb{R};H)$ and almost every $t\in\mathbb{R}.$
This representation yields that for $\rho>0$ the operator $\partial_{0,\rho}^{-1}$
is \emph{forward causal} whereas it is \emph{backward causal}%
\footnote{A mapping $F:D(F)\subseteq H_{\rho,0}(\mathbb{R};H)\to H_{\rho,0}(\mathbb{R};H)$
is called \emph{forward causal} if for each $f,g\in D(F)$ with $f=g$
on some interval $]-\infty,a[$ for $a\in\mathbb{R}$ the functions
$F(f)$ and $F(g)$ coincide on the same interval $]-\infty,a[.$
Analogously $F$ is called \emph{backward causal} if for each $f,g\in D(F)$
with $f=g$ on some interval $]a,\infty[$ for $a\in\mathbb{R}$ the
functions $F(f)$ and $F(g)$ coincide on the same interval $]a,\infty[.$%
} for $\rho<0.$

\item Let $\rho\in\mathbb{R}$ and denote by $\mathcal{L}_{\rho}:H_{\rho,0}(\mathbb{R};H)\to L_{2}(\mathbb{R};H)$
the so-called \emph{Fourier-Laplace transformation, }defined as the
unitary extension of the operator given by 
\[
\left(\mathcal{L}_{\rho}\phi\right)(x)=\frac{1}{\sqrt{2\pi}}\intop_{\mathbb{R}}e^{-\i xt}e^{-\rho t}\phi(t)\mbox{ d}t
\]

for $\phi\in C_{c}^{\infty}(\mathbb{R};H)$ and $x\in\mathbb{R}$.
Then 
\begin{equation}
\partial_{0,\rho}=\mathcal{L}_{\rho}^{\ast}(\i m+\rho)\mathcal{L}_{\rho},\label{eq:spec_partial_0}
\end{equation}

where by $m:D(m)\subseteq L_{2}(\mathbb{R};H)\to L_{2}(\mathbb{R};H)$
we denote the multiplication-by-the-argument operator with maximal
domain, i.e., $\left(mf\right)(t)=tf(t)$ for almost every $t\in\mathbb{R}$
and every $f\in D(m)\coloneqq\left\{ f\in L_{2}(\mathbb{R};H)\,\left|\,\left(t\mapsto tf(t)\right)\in L_{2}(\mathbb{R};H)\right.\right\} .$
Note that in the case $\rho=0$, \prettyref{eq:spec_partial_0} is
just the usual spectral representation for the weak derivative on
$L_{2}(\mathbb{R};H)$ via the Fourier transformation (see \cite[p. 112]{Akhiezer_Glazman_1993}).

\end{enumerate}
\end{rem}
We consider \prettyref{eq:evo} as an equation in the Hilbert space
$H_{\rho,0}(\mathbb{R};H).$ As a matter of physical relevance, we
require that the corresponding solution operator $\left(\partial_{0,\rho}M(\partial_{0,\rho}^{-1})+A\right)^{-1}$,
if it exists, is forward causal, that means, roughly speaking, that
the present state of the solution $u$ does not depend on the future
behavior of the source term $f.$ With \prettyref{rem:partial_0}
(c) in mind, we therefore assume that $\rho>0.$\\
We now define the operator $M(\partial_{0,\rho}^{-1})$ for $\rho>0$
with the help of formula \prettyref{eq:spec_partial_0}.
\begin{defn}
Let $r>0$ and $M:B_{\mathbb{C}}(r,r)\to L(H)$ %
\footnote{We denote by $B_{\mathbb{C}}(x,s)$ the open ball in $\mathbb{C}$
with center $x\in\mathbb{C}$ and radius $s>0$%
}. For $\rho>\frac{1}{2r}$ we define 
\[
M(\partial_{0,\rho}^{-1})\coloneqq\mathcal{L}_{\rho}^{\ast}M\left(\frac{1}{\i m+\rho}\right)\mathcal{L}_{\rho},
\]
where $M\left(\frac{1}{\i m+\rho}\right)$ is defined as the multiplication
operator $\left(M\left(\frac{1}{\i m+\rho}\right)f\right)(t)\coloneqq M\left(\frac{1}{\i t+\rho}\right)f(t)$
on $L_{2}(\mathbb{R};H)$ with domain $\left\{ f\in L_{2}(\mathbb{R};H)\,\left|\,\left(t\mapsto M\left(\frac{1}{\i t+\rho}\right)f(t)\right)\in L_{2}(\mathbb{R};H)\right.\right\} $.
We call $M(\partial_{0,\rho}^{-1})$ a \emph{linear material law}
if the function $M$ belongs to $\mathcal{H}^{\infty}(B_{\mathbb{C}}(r,r);L(H)),$
i.e., $M$ is bounded and analytic.\emph{ }\end{defn}
\begin{rem}
The notion material law is motivated by several examples of mathematical
physics, since it turns out that all material parameters, such as
mass density, conductivity, permeability etc.\ can be incorporated
into the operator $M(\partial_{0,\rho}^{-1})$ (see \cite{Picard}
for several examples). Thus, it is natural that $M(\partial_{0,\rho}^{-1})$
is forward causal. Since the operator $M(\partial_{0,\rho}^{-1})$
is linear and it commutes with the translation operators $\tau_{h}$,
mapping $u\in H_{\rho,0}(\mathbb{R};H)$ to $t\mapsto u(t+h)$ for
$h\in\mathbb{R}$, causality can be characterized via the requirement
that $\spt M(\partial_{0,\rho}^{-1})u\subseteq]0,\infty[$ if $\spt u\subseteq]0,\infty[$,
where by $\spt g$ we denote the support of a function $g\in L_{2,\mathrm{loc}}(\mathbb{R};H).$
This, however, can be characterized by the analyticity and boundedness
of the mapping $M$, employing a Paley-Wiener-type result (see e.g.
\cite[Theorem 19.2]{rudin1987real}). Moreover, note that due to the
boundedness of $M$, the operator $M\left(\partial_{0,\rho}^{-1}\right)$
becomes a bounded operator on $H_{\rho,0}(\mathbb{R};H).$
\end{rem}
We are now able to state the solution theory for evolutionary equations.
For the proof we refer to \cite{Picard,Trostorff2012_nonlin_bd}.
\begin{thm}[Solution Theory]
\label{thm:sol_theo} Let $A:D(A)\subseteq H\to H$ be a maximal
monotone linear operator. Moreover, let $r>0$ and $M\in\mathcal{H}^{\infty}(B_{\mathbb{C}}(r,r);L(H))$
and assume that the \emph{solvability condition} is satisfied:
\begin{equation}
\exists c>0\:\forall z\in B_{\mathbb{C}}(r,r)\,,x\in H:\,\Re\langle z^{-1}M(z)x|x\rangle\geq c|x|^{2}.\label{eq:solv}
\end{equation}
Then for each $\rho>\frac{1}{2r}$ the problem \prettyref{eq:evo}
is \emph{well-posed} in $H_{\rho,0}(\mathbb{R};H)$, i.e. for each
$\rho>\frac{1}{2r}$ the operator $\partial_{0,\rho}M(\partial_{0,\rho}^{-1})+A$
is boundedly invertible and has a dense range. Moreover, the solution
operator $\left(\overline{\partial_{0,\rho}M(\partial_{0,\rho}^{-1})+A}\right)^{-1}$
is forward causal. \end{thm}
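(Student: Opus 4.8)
The plan is to reduce everything to a Fourier--Laplace multiplier statement and read off strict accretivity from the solvability condition. Writing $B_{0}\coloneqq\partial_{0,\rho}M(\partial_{0,\rho}^{-1})$, conjugation with $\mathcal{L}_{\rho}$ turns $B_{0}$, via \prettyref{eq:spec_partial_0}, into multiplication by $N(t)\coloneqq(\i t+\rho)M\bigl(\tfrac{1}{\i t+\rho}\bigr)$ on $L_{2}(\mathbb{R};H)$. First I would record the geometric fact that for $\rho>\tfrac{1}{2r}$ the curve $t\mapsto\tfrac{1}{\i t+\rho}$ traces the circle $\partial B_{\mathbb{C}}(\tfrac{1}{2\rho},\tfrac{1}{2\rho})$, which lies inside $B_{\mathbb{C}}(r,r)$; hence $M$ is evaluated only where it is defined and \prettyref{eq:solv} applies. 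Since $z^{-1}=\i t+\rho$ at $z=\tfrac{1}{\i t+\rho}$, condition \prettyref{eq:solv} reads $\Re N(t)\geq c$ pointwise, which yields on the one hand $\Re\langle B_{0}u\mid u\rangle_{H_{\rho,0}}\geq c\,\|u\|_{H_{\rho,0}}^{2}$ after integration in $t$, and on the other hand (since $\Re N(t)\geq c$ makes each $N(t)$ boundedly invertible with $\|N(t)^{-1}\|\leq1/c$) that $B_{0}$ is itself continuously invertible with $\|B_{0}^{-1}\|\leq1/c$.

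Next I would combine this with $A$. Because $A$ is monotone, $\Re\langle Au\mid u\rangle\geq0$ on $D(A)$, so on $D(B_{0})\cap D(A)$ one obtains $\Re\langle(B_{0}+A)u\mid u\rangle_{H_{\rho,0}}\geq c\,\|u\|_{H_{\rho,0}}^{2}$. By Cauchy--Schwarz this gives $\|(B_{0}+A)u\|_{H_{\rho,0}}\geq c\,\|u\|_{H_{\rho,0}}$, so $B_{0}+A$ is injective, its inverse on the range is bounded by $1/c$, and the closure has closed range; this already settles bounded invertibility. The remaining --- and genuinely substantial --- point is that the range is dense, equivalently that $\overline{B_{0}+A}$ is onto. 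Here I would invoke the abstract criterion that a densely defined operator $C$ with $\Re\langle Cu\mid u\rangle\geq c\,\|u\|^{2}$ and $\Re\langle C^{\ast}v\mid v\rangle\geq c\,\|v\|^{2}$ is continuously invertible: strict accretivity of $C=B_{0}+A$ is in hand, and for the adjoint I would exploit that the Fourier multiplier of $B_{0}^{\ast}$ is $N(t)^{\ast}$, whose real part equals $\Re N(t)\geq c$, so $B_{0}^{\ast}$ is strictly accretive as well, while $A^{\ast}$ is again maximal monotone.

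The delicate step is that for unbounded summands one only has $\bigl(\overline{B_{0}+A}\bigr)^{\ast}\supseteq B_{0}^{\ast}+A^{\ast}$, so strict accretivity of the adjoint of the sum is not immediate. I would circumvent this by replacing $A$ with its Yosida approximation $A_{\epsilon}=A(1+\epsilon A)^{-1}$, which is bounded, everywhere defined and monotone, so that $(B_{0}+A_{\epsilon})^{\ast}=B_{0}^{\ast}+A_{\epsilon}^{\ast}$ holds honestly and both $B_{0}+A_{\epsilon}$ and its adjoint are strictly accretive with constant $c$; hence $B_{0}+A_{\epsilon}$ is continuously invertible with inverse bounded by $1/c$ uniformly in $\epsilon$. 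Solving $(B_{0}+A_{\epsilon})u_{\epsilon}=f$, the uniform bound lets me extract a weak limit and, invoking the maximal monotonicity (demiclosedness) of $A$, identify the limit as a solution of $(B_{0}+A)u=f$. This limiting argument is the main obstacle, since one must control $B_{0}u_{\epsilon}$ and $A_{\epsilon}u_{\epsilon}$ separately well enough to recover $u\in D(A)\cap D(B_{0})$.

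Finally, for forward causality I would use that the constant $c$, and hence the bound $\|(\overline{B_{0}+A})^{-1}\|\leq1/c$, is independent of $\rho$ for all $\rho>\tfrac{1}{2r}$. After checking that the solution operators for different $\rho$ are consistent (they solve the same $\rho$-independent equation on a common dense domain), I would fix $u$ with $\spt u\subseteq[a,\infty)$, set $v\coloneqq(\overline{B_{0}+A})^{-1}u$, and estimate, for $b<a$,
\[
\intop_{-\infty}^{b}\abs{v(t)}^{2}\dd t\leq e^{2\rho b}\,\|v\|_{H_{\rho,0}}^{2}\leq\frac{e^{2\rho b}}{c^{2}}\intop_{a}^{\infty}\abs{u(t)}^{2}e^{-2\rho t}\dd t=\frac{1}{c^{2}}\intop_{a}^{\infty}\abs{u(t)}^{2}e^{-2\rho(t-b)}\dd t.
\]
Letting $\rho\to\infty$, the right-hand side tends to $0$ because $t-b\geq a-b>0$ on the domain of integration, while by consistency the left-hand side does not depend on $\rho$; hence $v$ vanishes on $(-\infty,b)$ for every $b<a$, i.e.\ $\spt v\subseteq[a,\infty)$. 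This support preservation, extended to general $u$ by boundedness of the solution operator, is exactly forward causality and is the concrete incarnation of the Paley--Wiener mechanism alluded to after the definition of the material law.
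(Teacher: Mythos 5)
Your reduction to the multiplier $N(t)=(\i t+\rho)M\bigl(\tfrac{1}{\i t+\rho}\bigr)$, the strict accretivity estimates, and the causality argument via the $\rho$-uniform bound $1/c$ and the limit $\rho\to\infty$ all match the standard proof that the paper cites (it refers to \cite{Picard,Trostorff2012_nonlin_bd} rather than proving the theorem itself). You also correctly isolate the only genuinely hard point: surjectivity, i.e.\ that the \emph{full} adjoint $(B_{0}+A)^{\ast}$ has trivial kernel, which does not follow from accretivity of $B_{0}^{\ast}+A^{\ast}$ on $D(B_{0}^{\ast})\cap D(A^{\ast})$ alone.

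However, your proposed resolution via the spatial Yosida approximation $A_{\epsilon}=A(1+\epsilon A)^{-1}$ has a genuine gap exactly where you flag it, and I do not see how to close it along that route. From $(B_{0}+A_{\epsilon})u_{\epsilon}=f$ and the energy identity one only gets $\Re\langle A_{\epsilon}u_{\epsilon}\mid u_{\epsilon}\rangle\leq\|f\|^{2}/c$, hence $\|A_{\epsilon}u_{\epsilon}\|\leq\|f\|/\sqrt{c\epsilon}$; there is no uniform bound on $\|A_{\epsilon}u_{\epsilon}\|$ or $\|B_{0}u_{\epsilon}\|$, so neither admits a weak limit and demiclosedness of $A$ has nothing to act on. The usual device for obtaining such a bound (an inequality of the form $\Re\langle A_{\epsilon}v\mid B_{0}v\rangle\geq-C$, as in Brezis' perturbation theorems) is unavailable here because the resolvents of $A$ act only in the spatial variable and need not commute with $M(\partial_{0,\rho}^{-1})$, which mixes space and the operator $M(z)\in L(H)$. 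The ingredient the actual proof uses instead is \emph{temporal} regularization: the lifted operator $A$ commutes with $(1+\epsilon\partial_{0,\rho})^{-1}$ and with $(1+\epsilon\partial_{0,\rho}^{\ast})^{-1}$, these resolvents map $D(A)$ into $D(\partial_{0,\rho})\cap D(A)$ and converge strongly to the identity, and $\partial_{0,\rho}(1+\epsilon\partial_{0,\rho})^{-1}$ is bounded. Approximating an arbitrary $v\in D((B_{0}+A)^{\ast})$ by $(1+\epsilon\partial_{0,\rho}^{\ast})^{-1}v\in D(\partial_{0,\rho}^{\ast})\cap D(A^{\ast})$ then shows that $(B_{0}+A)^{\ast}$ itself satisfies $\Re\langle(B_{0}+A)^{\ast}v\mid v\rangle\geq c\|v\|^{2}$, whence its kernel is trivial and the (closed) range of $\overline{B_{0}+A}$ is all of $H_{\rho,0}(\mathbb{R};H)$. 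Without this commutation argument (or an equivalent substitute), your surjectivity step does not go through.
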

\begin{rem}
\label{rem: regularity}Note that the solution operator $\left(\overline{\partial_{0,\rho}M(\partial_{0,\rho}^{-1})+A}\right)^{-1}$
commutes with the time-derivative $\partial_{0,\rho}.$ This yields,
that for right hand sides $f\in D(\partial_{0,\rho}^{k})$ for $k\in\mathbb{N}$
in \prettyref{eq:evo} the corresponding solution $u$ also lies in
$D(\partial_{0,\rho}^{k}),$ i.e. the solution operator preserves
``temporal'' regularity%
\footnote{Indeed, one can show that the solution operator $\left(\overline{\partial_{0,\rho}M(\partial_{0,\rho}^{-1})+A}\right)^{-1}$
extends to the whole Sobolev-chain $\left(H_{k}(\partial_{0,\rho})\right)_{k\in\mathbb{Z}}$
associated with the derivative $\partial_{0,\rho}$, see \cite{Picard_McGhee}
which yields a solution theory for distributional right-hand sides.%
}.
\end{rem}

\section{An abstract condition for exponential stability}

In this section we show that under certain constraints on the function
$M$, the corresponding evolutionary problem is exponentially stable.
Although in the literature, exponential stability usually means that
the solution of an initial value problem decays with an exponential
rate as time tends to infinity, we like to introduce a slightly weaker
notion within our framework which, however, yields the desired decay
if the source term $f$ is regular enough (see \prettyref{rem: interpol}
(a)). Moreover, since the framework of evolutionary equations covers
different types of equations, where initial values do not make sense,
such as differential-algebraic or even pure algebraic equations or
equations with memory effect, where a given pre-history would be more
appropriate then an initial value, we cannot treat initial value problems
within this general approach. However, in concrete examples we can
reformulate initial value problems as evolutionary equations with
a modified right hand side (see Subsection 4.1) such that the following
results still apply. 
\begin{defn}
Let $A:D(A)\subseteq H\to H$ be a maximal monotone linear operator
and $M\in\mathcal{H}^{\infty}(B_{\mathbb{C}}(r,r);L(H))$ for some
$r>0$ which satisfies the solvability condition \prettyref{eq:solv}.
Let $\rho>\frac{1}{2r}.$ We call the solution operator $\left(\overline{\partial_{0,\rho}M(\partial_{0,\rho}^{-1})+A}\right)^{-1}$
of \prettyref{eq:evo} \emph{exponentially stable} \emph{with stability
rate }$\nu_{0}>0$ if for each $0\leq\nu<\nu_{0}$ and $f\in H_{-\nu,0}(\mathbb{R};H)\cap H_{\rho,0}(\mathbb{R};H)$
the solution $u$ of \prettyref{eq:evo} satisfies 
\[
u=\left(\overline{\partial_{0,\rho}M(\partial_{0,\rho}^{-1})+A}\right)^{-1}f\in\bigcap_{-\nu<\mu\leq\rho}H_{\mu,0}(\mathbb{R};H),
\]
which especially implies $\intop_{\mathbb{R}}e^{2\mu t}|u(t)|^{2}\mbox{ d}t<\infty$
for all $0\leq\mu<\nu.$\end{defn}
\begin{rem}
\label{rem: interpol}$\,$

\begin{enumerate}[(a)]

\item We show that our notion of exponential stability indeed yields
an exponential decay of the solution if the given right hand side
is regular enough. For doing so, let $\rho>\frac{1}{2r}$ and $\left(\overline{\partial_{0,\rho}M(\partial_{0,\rho}^{-1})+A}\right)^{-1}$
exponentially stable with stability rate $\nu_{0}>0.$ Moreover, assume
that $f\in H_{-\nu,0}(\mathbb{R};H)\cap H_{\rho,0}(\mathbb{R};H)$
and $f\in D(\partial_{0,\rho})$ such that $\partial_{0,\rho}f\in H_{-\nu,0}(\mathbb{R};H)\cap H_{\rho,0}(\mathbb{R};H)$
for some $0<\nu<\nu_{0}$. Then $u=\left(\overline{\partial_{0,\rho}M(\partial_{0,\rho}^{-1})+A}\right)^{-1}f$
also lies in $D(\partial_{0,\rho})$ and $\partial_{0,\rho}u=\left(\overline{\partial_{0,\rho}M(\partial_{0,\rho}^{-1})+A}\right)^{-1}\partial_{0,\rho}f$
(compare \prettyref{rem: regularity}). By the assumed exponential
stability, we get that $e^{\mu m}u\in L_{2}(\mathbb{R};H)$ and $e^{\mu m}\partial_{0,\rho}u\in L_{2}(\mathbb{R};H)$
for each $0\leq\mu<\nu.$ The latter yields $e^{\mu m}u\in W_{2}^{1}(\mathbb{R};H)$.
Indeed, for $\phi\in C_{c}^{\infty}(\mathbb{R};H)$ we compute 
\begin{align*}
 & \langle\mu e^{\mu m}u+e^{\mu m}\partial_{0,\rho}u|\phi\rangle_{L_{2}(\mathbb{R};H)}\\
 & =\intop_{\mathbb{R}}\langle u(t)|\mu e^{\mu t}\phi(t)\rangle\mbox{ d}t+\intop_{\mathbb{R}}\left\langle \left.\left(\partial_{0,\rho}u\right)(t)\right|e^{\left(2\rho+\mu\right)t}\phi(t)e^{-2\rho t}\right\rangle \mbox{ d}t\\
 & =\intop_{\mathbb{R}}\langle u(t)|\mu e^{\mu t}\phi(t)\rangle\mbox{ d}t+\langle\partial_{0,\rho}u|e^{(2\rho+\mu)m}\phi\rangle_{H_{\rho,0}(\mathbb{R};H)}\\
 & =\intop_{\mathbb{R}}\langle u(t)|\mu e^{\mu t}\phi(t)\rangle\mbox{ d}t+\left\langle u\left|-\partial_{0,\rho}\left(e^{(2\rho+\mu)m}\phi\right)\right.\right\rangle _{H_{\rho,0}(\mathbb{R};H)}\\
 & \quad+\left\langle u\left|2\rho\left(e^{(2\rho+\mu)m}\phi\right)\right.\right\rangle _{H_{\rho,0}(\mathbb{R};H)}\\
 & =\intop_{\mathbb{R}}\langle u(t)|\mu e^{\mu t}\phi(t)\rangle\mbox{ d}t-\intop_{\mathbb{R}}(2\rho+\mu)\langle u(t)|e^{\left(2\rho+\mu\right)t}\phi(t)\rangle e^{-2\rho t}\mbox{ d}t\\
 & \quad-\intop_{\mathbb{R}}\langle u(t)|e^{\left(2\rho+\mu\right)t}\phi'(t)\rangle e^{-2\rho t}\mbox{ d}t+\intop_{\mathbb{R}}2\rho\langle u(t)|e^{\left(2\rho+\mu\right)t}\phi(t)\rangle e^{-2\rho t}\mbox{ d}t\\
 & =-\intop_{\mathbb{R}}\langle u(t)|e^{\left(2\rho+\mu\right)t}\phi'(t)\rangle e^{-2\rho t}\mbox{ d}t\\
 & =-\langle e^{\mu m}u|\phi'\rangle_{L_{2}(\mathbb{R};H)}.
\end{align*}
Thus, we obtain $e^{\mu t}|u(t)|\to0$ as $t$ tends to infinity for
each $0\leq\mu<\nu$ due to Sobolev's embedding theorem (see e.g.\
\cite[p. 408]{engel2000one}), i.e.\ $u$ decays exponentially with
a decay rate less than $\nu.$ 

\item If $f\in H_{\mu,0}(\mathbb{R};H)\cap H_{\rho,0}(\mathbb{R};H)$
for some $\mu,\rho\in\mathbb{R}$ with $\mu>\rho$, then $f\in H_{\nu,0}(\mathbb{R};H)$
for all $\nu\in[\rho,\mu].$ Indeed, we estimate 
\begin{align*}
\intop_{\mathbb{R}}|f(t)|^{2}e^{-2\nu t}\mbox{ d}t & =\intop_{-\infty}^{0}|f(t)|^{2}e^{-2\nu t}\mbox{ d}t+\intop_{0}^{\infty}|f(t)|^{2}e^{-2\nu t}\mbox{ d}t\\
 & \leq\intop_{-\infty}^{0}|f(t)|^{2}e^{-2\mu t}\mbox{ d}t+\intop_{0}^{\infty}|f(t)|^{2}e^{-2\rho t}\mbox{ d}t\\
 & \leq|f|_{H_{\mu,0}(\mathbb{R};H)}^{2}+|f|_{H_{\rho,0}(\mathbb{R};H)}^{2}.
\end{align*}

\end{enumerate}
\end{rem}
We now give conditions for the function $M$ and show that they yield
the well-posedness and exponential stability for the corresponding
evolutionary problem.

\begin{hyp} Let $\nu_{0}>0$. We assume that 

\begin{enumerate}[(a)]

\item $M:\mathbb{C}\setminus B_{\mathbb{C}}\left[-\frac{1}{2\nu_{0}},\frac{1}{2\nu_{0}}\right]\to L(H)$
is analytic%
\footnote{We denote by $B_{\mathbb{C}}[x,s]$ the closed ball in $\mathbb{C}$
with center $x\in\mathbb{C}$ and radius $s\geq0$.%
}, 

\item for each $r>0$ and $0\leq\nu<\nu_{0}$ the function%
\footnote{Here and further on we set $B_{\mathbb{C}}(r,r)\setminus\{0^{-1}\}\coloneqq B_{\mathbb{C}}(r,r).$%
} 
\[
B_{\mathbb{C}}(r,r)\setminus\{\nu^{-1}\}\ni z\mapsto\left(1-\nu z\right)M\left(z(1-\nu z)^{-1}\right)
\]

has a bounded and analytic extension to $B_{\mathbb{C}}(r,r)$, 

\item for every $0<\nu<\nu_{0}$ there exists $c>0$ such that for
all $z\in\mathbb{C}\setminus B_{\mathbb{C}}\left[-\frac{1}{2\nu},\frac{1}{2\nu}\right]$
\[
\Re z^{-1}M(z)\geq c.
\]

\end{enumerate}

\end{hyp}
\begin{rem}
Let $M$ satisfy the assumptions above and let $r>0.$ Then the restriction
of $M$ to $B_{\mathbb{C}}(r,r)$ is an element of $\mathcal{H}^{\infty}(B_{\mathbb{C}}(r,r);L(H)).$
Indeed, the analyticity is clear from (a) and the boundedness follows
from (b). Hence, together with (c), it follows that the problem \prettyref{eq:evo}
is well-posed for every $\rho>0$ according to \prettyref{thm:sol_theo}.
\end{rem}
We now state some auxiliary results which in particular imply that
the solution operator $\left(\overline{\partial_{0,\rho}M\left(\partial_{0,\rho}^{-1}\right)+A}\right)^{-1}$
for an evolutionary problem does not depend on the particular choice
of $\rho.$ These results can also be found in \cite[p. 429 f.]{Picard_McGhee}.
However, for sake of completeness we present them again with a slightly
modified proof.
\begin{lem}
\label{lem:shift_1}Let $\rho,\mu\in\mathbb{R}$ with $\mu>\rho$
and set $U\coloneqq\{z\in\mathbb{C}\,|\,\Re z\in[\rho,\mu]\}$. Moreover,
let $f:U\to H$ be continuous on $U$ and analytic in the interior
of $U$, such that $\intop_{\rho}^{\mu}f(\i R+s)\,\mathrm{d}s\to0$
as $R\to\pm\infty$. Then 
\[
\limsup_{R\to\infty}\left|\,\intop_{-R}^{R}f(\i t+\mu)\,\mathrm{d}t-\intop_{-R}^{R}f(\i t+\rho)\,\mathrm{d}t\right|=0.
\]
\end{lem}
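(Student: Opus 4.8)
The plan is to apply Cauchy's integral theorem to the rectangle whose boundary consists of the two vertical segments appearing in the claim together with the two horizontal segments at heights $\pm R$. Concretely, for $R>0$ I would consider the closed, positively oriented rectangular contour $\partial Q_R$ with vertices $\rho-\i R$, $\mu-\i R$, $\mu+\i R$ and $\rho+\i R$, which is entirely contained in the strip $U$. Since $f$ is continuous on $U$ and analytic in its interior, the $H$-valued contour integral $\intop_{\partial Q_R} f(z)\dd z$ should vanish; this is the vector-valued Cauchy theorem, which one reduces to the scalar case by pairing with the continuous functionals $\langle h|\,\cdot\,\rangle$, $h\in H$, and applying the classical result to the scalar analytic function $z\mapsto\langle h|f(z)\rangle$.

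Next I would parametrise the four edges. Writing $z=\i t+\mu$ on the right edge (with $t$ running from $-R$ to $R$) and $z=\i t+\rho$ on the left edge (with $t$ running from $R$ to $-R$) produces exactly the two vertical integrals from the statement, each carrying a factor $\i$ coming from $\dd z=\i\dd t$; writing $z=\i R+s$ and $z=-\i R+s$ with $s\in[\rho,\mu]$ on the top and bottom edges produces the two horizontal integrals $\intop_{\rho}^{\mu} f(\i R+s)\dd s$ and $\intop_{\rho}^{\mu} f(-\i R+s)\dd s$. Collecting the contributions, the identity $\intop_{\partial Q_R} f\dd z=0$ rearranges to
\begin{equation*}
\i\left(\intop_{-R}^{R} f(\i t+\mu)\dd t-\intop_{-R}^{R} f(\i t+\rho)\dd t\right)=\intop_{\rho}^{\mu} f(\i R+s)\dd s-\intop_{\rho}^{\mu} f(-\i R+s)\dd s .
\end{equation*}

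Finally I would let $R\to\infty$. By the hypothesis $\intop_\rho^\mu f(\i R+s)\dd s\to0$ as $R\to\pm\infty$, both terms on the right-hand side tend to $0$ (the first using $R\to+\infty$, the second $R\to-\infty$), so the difference of the two vertical integrals converges to $0$ in $H$; hence the two families of integrals have the same asymptotic behaviour as $R\to\infty$ and in particular their limits superior coincide. The only genuinely delicate point is the first step: the contour $\partial Q_R$ meets the boundary lines $\Re z=\rho$ and $\Re z=\mu$ of the strip, so Cauchy's theorem cannot be invoked on an open set containing it. I would circumvent this by first integrating over the shrunken rectangles with $\Re z\in[\rho+\epsilon,\mu-\epsilon]$, where $f$ is analytic on a neighbourhood of the contour, and then passing to the limit $\epsilon\downarrow0$, using the continuity of $f$ up to the boundary (uniform continuity on the relevant compact sets) to recover $\intop_{\partial Q_R}f\dd z=0$; equivalently one may simply cite the standard strengthening of the Cauchy--Goursat theorem for functions continuous on a closed rectangle and holomorphic in its interior.
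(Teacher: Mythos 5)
Your proof is correct and takes essentially the same route as the paper's: apply Cauchy's integral theorem to the rectangle with vertices $\rho\pm\i R$ and $\mu\pm\i R$, rearrange the resulting identity, and let $R\to\infty$ using the assumed decay of the horizontal integrals. The paper simply states the contour identity without your extra $\epsilon$-shrinking step for the boundary of the strip; that refinement is welcome but does not change the method.
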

\begin{proof}
According to Cauchy's integral theorem we have 
\[
\i\intop_{-R}^{R}f(\i t+\rho)\mbox{ d}t+\intop_{\rho}^{\mu}f(\i R+s)\mbox{ d}s-\left(\i\intop_{-R}^{R}f(\i t+\mu)\mbox{ d}t+\intop_{\rho}^{\mu}f(-\i R+s)\mbox{ d}s\right)=0
\]
for each $R>0.$ The assertion now follows by taking the limes superior
as $R$ tend to $\infty.$ 
\end{proof}
The next lemma shows that we can approximate a function which belongs
to two different exponentially weighted $L_{2}$-spaces by the same
sequence of test functions with respect to both topologies.
\begin{lem}
\label{lem:density} Let $\rho,\mu\in\mathbb{R}$ and $f\in H_{\rho,0}(\mathbb{R};H)\cap H_{\mu,0}(\mathbb{R};H).$
Then, for each $\varepsilon>0$ there exists $\phi\in C_{c}^{\infty}(\mathbb{R};H)$
such that 
\[
\max\left\{ |f-\phi|_{H_{\rho,0}(\mathbb{R};H)},|f-\phi|_{H_{\mu,0}(\mathbb{R};H)}\right\} \leq\varepsilon.
\]
\end{lem}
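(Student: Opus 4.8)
The plan is to reduce the simultaneous approximation to the standard one-norm density of $C_c^\infty(\mathbb{R};H)$ by first cutting $f$ off to a compact interval. The crucial observation is that on any fixed compact set $K$ the two exponential weights $e^{-2\rho t}$ and $e^{-2\mu t}$ are bounded above and below by positive constants, so that for functions supported in $K$ the norms of $H_{\rho,0}(\mathbb{R};H)$, $H_{\mu,0}(\mathbb{R};H)$ and $L_2(\mathbb{R};H)$ are all equivalent; a single approximation then controls both estimates at once. The argument is symmetric in $\rho$ and $\mu$, so no ordering need be assumed.

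First I would truncate. For $n\in\mathbb{N}$ set $f_n\coloneqq\chi_{[-n,n]}f$, the restriction of $f$ to $[-n,n]$ extended by zero. Since $f\in H_{\rho,0}(\mathbb{R};H)\cap H_{\mu,0}(\mathbb{R};H)$, dominated convergence (with integrable dominating function $t\mapsto|f(t)|^2e^{-2\rho t}$) yields
\[
|f-f_n|_{H_{\rho,0}(\mathbb{R};H)}^2=\int_{|t|>n}|f(t)|^2e^{-2\rho t}\dd t\to0
\]
as $n\to\infty$, and likewise with $\rho$ replaced by $\mu$. Hence, given $\varepsilon>0$, I can fix $n$ so large that both $|f-f_n|_{H_{\rho,0}(\mathbb{R};H)}$ and $|f-f_n|_{H_{\mu,0}(\mathbb{R};H)}$ are at most $\varepsilon/2$.

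Next I would mollify $f_n$. Since $f_n\in L_2(\mathbb{R};H)$ has compact support, choosing a standard (scalar) mollifier $(\psi_\delta)_{\delta>0}$, the vector-valued convolution $\phi\coloneqq\psi_\delta*f_n$ lies in $C_c^\infty(\mathbb{R};H)$ with $\spt\phi\subseteq K\coloneqq[-n-1,n+1]$ for $\delta\le1$, and $\phi\to f_n$ in $L_2(\mathbb{R};H)$ as $\delta\to0$. Both $f_n$ and $\phi$ are supported in the fixed compact interval $K$, on which there is a constant $c\ge1$ with $e^{-2\rho t},e^{-2\mu t}\le c$. Consequently
\[
|f_n-\phi|_{H_{\rho,0}(\mathbb{R};H)}^2=\int_K|f_n(t)-\phi(t)|^2e^{-2\rho t}\dd t\le c\,|f_n-\phi|_{L_2(\mathbb{R};H)}^2,
\]
and the same bound holds for $\mu$. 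Choosing $\delta$ small enough therefore makes both $|f_n-\phi|_{H_{\rho,0}(\mathbb{R};H)}$ and $|f_n-\phi|_{H_{\mu,0}(\mathbb{R};H)}$ at most $\varepsilon/2$.

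Finally the triangle inequality in each norm gives $|f-\phi|_{H_{\rho,0}(\mathbb{R};H)}\le|f-f_n|_{H_{\rho,0}(\mathbb{R};H)}+|f_n-\phi|_{H_{\rho,0}(\mathbb{R};H)}\le\varepsilon$ and the analogous estimate in $H_{\mu,0}(\mathbb{R};H)$, which is the claim. The only delicate point is the \emph{simultaneity} of the two approximations, and this is exactly what the compact-support reduction resolves: once the weights are comparable on $K$, one mollification parameter $\delta$ controls both estimates together, so there is no genuine obstacle beyond bookkeeping the two triangle inequalities in parallel.
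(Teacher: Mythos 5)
Your proof is correct and follows essentially the same route as the paper: truncate $f$ to a compact interval (controlling both weighted norms simultaneously by dominated convergence), mollify the truncation, and use that on a fixed compact set the exponential weights are comparable to the Lebesgue measure so that one $L_2$-estimate controls both norms. The only difference is cosmetic — the paper writes out the explicit constant $e^{-2\max\{|\rho|,|\mu|\}(N+1)}$ where you use a generic bound $c$ for the weights on $K$.
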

\begin{proof}
Let $\varepsilon>0.$ Then we choose $N\in\mathbb{N}$ such that $f_{N}\coloneqq f\chi_{[-N,N]}$
satisfies 
\[
\max\left\{ |f-f_{N}|_{H_{\rho,0}(\mathbb{R};H)},|f-f_{N}|_{H_{\mu,0}(\mathbb{R};H)}\right\} \leq\frac{\varepsilon}{2}.
\]
We denote by $(\psi_{k})_{k\in\mathbb{N}}\in C_{c}^{\infty}(\mathbb{R})^{\mathbb{N}}$
the Friedrichs mollifier (see e.g. \cite[Chapter C.4]{evans2010partial}).
Then, for each $k\in\mathbb{N}$ we have that $\spt\psi_{k}\ast f_{N}\subseteq[-N-1,N+1].$
Now, we choose $k_{0}$ large enough such that 
\[
|\psi_{k_{0}}\ast f_{N}-f_{N}|_{L_{2}([-N-1,N+1];H)}\leq e^{-2\max\{|\rho|,|\mu|\}(N+1)}\frac{\varepsilon}{2}.
\]
Then the function $\psi_{k_{0}}\ast f_{N}\in C_{c}^{\infty}(\mathbb{R};H)$
has the desired property.\end{proof}
\begin{lem}
\label{lem:shift_2}Let $\rho,\mu\in\mathbb{R}$ with $\mu>\rho$
and set $U\coloneqq\{z\in\mathbb{C}\,|\,\Re z\in[\rho,\mu]\}$. Moreover,
let $f\in H_{\rho,0}(\mathbb{R};H)\cap H_{\mu,0}(\mathbb{R};H)$ and
$T\in\mathcal{H}^{\infty}(\interior U;L(H))\cap C_{b}(U;L(H))$ (i.e.
$T$ is bounded and continuous on $U$ and analytic in the interior
of $U$). Then 
\[
\left(\mathcal{L}_{\rho}^{\ast}T(\i m+\rho)\mathcal{L}_{\rho}f\right)(t)=\left(\mathcal{L}_{\mu}^{\ast}T(\i m+\mu)\mathcal{L}_{\mu}f\right)(t)
\]
for almost every $t\in\mathbb{R}.$ \end{lem}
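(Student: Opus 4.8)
The plan is to reduce the identity to the case of test functions $\phi\in C_{c}^{\infty}(\mathbb{R};H)$ and then to recognise both sides as integrals of one and the same $H$-valued function along two parallel vertical lines, which are shifted into one another by \prettyref{lem:shift_1}. For the reduction I would first note that, since $T$ is bounded on $U$, the multiplication operator $T(\i m+\rho)$ is bounded on $L_{2}(\mathbb{R};H)$, so that $\mathcal{L}_{\rho}^{\ast}T(\i m+\rho)\mathcal{L}_{\rho}$ is a bounded operator on $H_{\rho,0}(\mathbb{R};H)$, and likewise $\mathcal{L}_{\mu}^{\ast}T(\i m+\mu)\mathcal{L}_{\mu}$ is bounded on $H_{\mu,0}(\mathbb{R};H)$. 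Choosing, by \prettyref{lem:density}, a sequence $(\phi_{n})$ in $C_{c}^{\infty}(\mathbb{R};H)$ with $\phi_{n}\to f$ in both $H_{\rho,0}(\mathbb{R};H)$ and $H_{\mu,0}(\mathbb{R};H)$, the two operators applied to $\phi_{n}$ converge to the respective sides of the claimed equation in the respective norms. As convergence in $H_{\rho,0}(\mathbb{R};H)$ or $H_{\mu,0}(\mathbb{R};H)$ forces convergence in $L_{2}$ on every bounded interval, I can pass to a common subsequence converging almost everywhere; hence it suffices to establish the identity for each $\phi_{n}$.

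For $\phi\in C_{c}^{\infty}(\mathbb{R};H)$ I would use that its two-sided Laplace transform $\hat{\phi}(z)\coloneqq\frac{1}{\sqrt{2\pi}}\intop_{\mathbb{R}}e^{-z\tau}\phi(\tau)\dd\tau$ extends to an entire $H$-valued function whose restriction to the line $\Re z=\rho$ is exactly $\mathcal{L}_{\rho}\phi$, that is, $\hat{\phi}(\rho+\i x)=(\mathcal{L}_{\rho}\phi)(x)$. Writing $\mathcal{L}_{\rho}^{\ast}=e^{\rho m}\mathcal{F}^{-1}$ with $\mathcal{F}^{-1}$ the inverse Fourier transform and combining $e^{\rho t}e^{\i xt}=e^{(\rho+\i x)t}$ turns the left-hand side into
\[
\left(\mathcal{L}_{\rho}^{\ast}T(\i m+\rho)\mathcal{L}_{\rho}\phi\right)(t)=\frac{1}{\sqrt{2\pi}}\intop_{\mathbb{R}}e^{(\rho+\i x)t}T(\rho+\i x)\hat{\phi}(\rho+\i x)\dd x,
\]
and analogously for the $\mu$-line. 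Thus, with $t$ fixed, both sides are integrals of the single $H$-valued function $G(z)\coloneqq e^{zt}T(z)\hat{\phi}(z)$ along the lines $\Re z=\rho$ and $\Re z=\mu$, and it remains to show that these two line integrals agree.

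The final step applies \prettyref{lem:shift_1} to $G$, which is continuous on $U$ and analytic in $\interior U$ by the corresponding properties of $T$ (and the entireness of $e^{zt}\hat{\phi}(z)$). The two hypotheses needed — the vanishing of the horizontal segments $\intop_{\rho}^{\mu}G(\i R+s)\dd s\to0$ as $R\to\pm\infty$, and the convergence of the line integrals as absolutely convergent integrals, so that the symmetric $\limsup$ in \prettyref{lem:shift_1} coincides with the full integral — both follow from one decay estimate: since $\phi$ is smooth with compact support, repeated integration by parts gives $\Abs{\hat{\phi}(s+\i\sigma)}\leq C_{N}(1+\abs{\sigma})^{-N}$ for every $N$, uniformly for $s$ in the compact interval $[\rho,\mu]$, while $\Abs{T(z)}$ is bounded on $U$ and $\abs{e^{zt}}$ is bounded on $U$ for fixed $t$. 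I expect this uniform-decay bookkeeping (together with the justification of the explicit inverse-transform formula for $\mathcal{L}_{\rho}^{\ast}$ on these nice integrands) to be the only genuine obstacle; once it is in place, \prettyref{lem:shift_1} yields equality of the two line integrals for every $t$, and the density argument of the first paragraph promotes this to the asserted almost-everywhere equality for arbitrary $f\in H_{\rho,0}(\mathbb{R};H)\cap H_{\mu,0}(\mathbb{R};H)$.
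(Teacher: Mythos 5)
Your proposal is correct and follows essentially the same route as the paper: reduce to test functions via Lemma \ref{lem:density}, rewrite both sides as line integrals of $z\mapsto e^{zt}T(z)\hat{\phi}(z)$ (the paper writes this as $e^{zt}T(z)(\mathcal{L}_{\Re z}\phi)(\Im z)$, the same function), and apply Lemma \ref{lem:shift_1}. The only cosmetic difference is that you obtain the vanishing of the horizontal segments from rapid decay of $\hat\phi$ via integration by parts, where the paper uses boundedness together with the Riemann--Lebesgue lemma and dominated convergence; both work, and your explicit remark that the line integrals converge absolutely (so the $\limsup$ in Lemma \ref{lem:shift_1} is the full integral) is a point the paper passes over silently.
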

\begin{proof}
According to \prettyref{lem:density} it suffices to prove the assertion
for test functions. So let $\phi\in C_{c}^{\infty}(\mathbb{R};H)$.
We show that the function 
\[
U\ni z\mapsto e^{zt}T(z)\left(\left(\mathcal{L}_{\Re z}\phi\right)(\Im z)\right)\in H
\]
satisfies the assumptions of \prettyref{lem:shift_1}. Indeed it is
continuous in $U$ and analytic in the interior of $U$ as a composition
of analytic functions. Furthermore, we estimate for $s\in[\rho,\mu],\xi\in\mathbb{R}:$
\[
\left|\mathcal{L}_{s}\phi(\xi)\right|\leq\frac{1}{\sqrt{2\pi}}\intop_{\mathbb{R}}e^{-rs}|\phi(r)|\mbox{ d}r\leq\frac{C}{\sqrt{2\pi}}\intop_{\mathbb{R}}|\phi(r)|\mbox{ d}r,
\]
where $C\coloneqq\sup\{e^{-rs}\,|\, s\in[\rho,\mu],r\in\spt\phi\}$,
which shows that the function 
\[
U\ni z\mapsto\left(\mathcal{L}_{\Re z}\phi\right)(\Im z)
\]
is bounded. Moreover, due to the Riemann-Lebesgue lemma, we get that
$\left(\mathcal{L}_{s}\phi\right)(R)\to0$ as $R\to\pm\infty$ for
every $s\in\mathbb{R}.$ Therefore, according to Lebesgue's dominated
convergence theorem, we deduce that 
\[
\left|\intop_{\rho}^{\mu}e^{\left(\i R+s\right)t}T(\i R+s)\left(\mathcal{L}_{s}\phi\right)(R)\mbox{ d}s\right|\leq\max\{e^{\rho t},e^{\mu t}\}|T|_{\infty}\intop_{\rho}^{\mu}|(\mathcal{L}_{s}\phi)(R)|\mbox{ d}s\to0\quad(R\to\pm\infty).
\]
Thus, by \prettyref{lem:shift_1}, we get that 
\[
\intop_{\mathbb{R}}e^{\left(\i s+\rho\right)t}T(\i s+\rho)\left(\mathcal{L}_{\rho}\phi\right)(s)\mbox{ d}s=\intop_{\mathbb{R}}e^{\left(\i s+\mu\right)t}T(\i s+\mu)\mathcal{L}_{\mu}\phi(s)\mbox{ d}s,
\]
which yields the assertion. 
\end{proof}
We are now able to prove our main theorem.
\begin{thm}
\label{thm:stability}Let $A:D(A)\subseteq H\to H$ be a maximal monotone
linear operator and $M$ a mapping satisfying the hypotheses above
for some $\nu_{0}>0$. Then for each $\rho>0$ the solution operator
$\left(\overline{\partial_{0,\rho}M(\partial_{0,\rho}^{-1})+A}\right)^{-1}$
is exponentially stable with stability rate $\nu_{0}$. \end{thm}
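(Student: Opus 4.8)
The plan is to deduce \prettyref{thm:stability} from the solution theory \prettyref{thm:sol_theo} by combining two facts: that the evolutionary problem stays well-posed at every weight $\mu>-\nu_0$, and that the solution operators attached to different weights coincide on the intersection of the respective spaces, which is precisely what \prettyref{lem:shift_1}--\prettyref{lem:shift_2} (via the density \prettyref{lem:density}) are designed to deliver. The structural backbone is that, after transforming with $\mathcal{L}_\rho$, the operator $\partial_{0,\rho}M(\partial_{0,\rho}^{-1})+A$ becomes the pointwise family $s\mapsto sM(s^{-1})+A$ along $\Re s=\rho$ --- here $A$ commutes with $\mathcal{L}_\rho$, acting only in the spatial variable --- so that the solution operator is the Fourier--Laplace multiplier with $L(H)$-valued symbol $T(s)\coloneqq\left(sM(s^{-1})+A\right)^{-1}$. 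Exponential stability then means shifting the line $\Re s=\rho$ leftwards, down to $\{\Re s>-\nu_0\}$, while retaining control of $T$.

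For the persistence of well-posedness I would, for $0<\nu'<\nu_0$, introduce the shifted material law
\[
M_{\nu'}(z)\coloneqq(1-\nu'z)\,M\!\left(z(1-\nu'z)^{-1}\right),
\]
which by Hypothesis (b) extends to an element of $\mathcal{H}^\infty(B_{\mathbb{C}}(r,r);L(H))$ for every $r>0$. With the Möbius substitution $w\coloneqq z(1-\nu'z)^{-1}$ one has the identity $z^{-1}M_{\nu'}(z)=w^{-1}M(w)$, and since $z\in B_{\mathbb{C}}(r,r)$ forces $\Re(1/w)=\Re(1/z)-\nu'>-\nu'$, the point $w$ stays outside $B_{\mathbb{C}}\!\left[-\tfrac{1}{2\nu'},\tfrac{1}{2\nu'}\right]$; Hypothesis (c) (with parameter $\nu'$) then gives $\Re z^{-1}M_{\nu'}(z)\ge c>0$ on $B_{\mathbb{C}}(r,r)$, uniformly in $r$. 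Hence $M_{\nu'}$ satisfies \prettyref{eq:solv}, and \prettyref{thm:sol_theo} renders $\partial_{0,\sigma}M_{\nu'}(\partial_{0,\sigma}^{-1})+A$ well-posed for every $\sigma>0$. Conjugating with the unitary $e^{-\nu'm}\colon H_{\sigma,0}(\mathbb{R};H)\to H_{\sigma-\nu',0}(\mathbb{R};H)$ and matching symbols through $s\,M_{\nu'}(s^{-1})=(s-\nu')M\!\left((s-\nu')^{-1}\right)$ identifies this operator with the \emph{original} problem at weight $\sigma-\nu'$. Letting $\sigma>0$ and $0<\nu'<\nu_0$ range freely, the original equation is well-posed at every weight $\mu>-\nu_0$.

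It remains to control $T$ and to invoke weight-independence. By Hypothesis (a) (read through $z=s^{-1}$) the symbol $T$ is analytic on $\{\Re s>-\nu_0\}$, and on each sub-half-plane $\{\Re s>-\nu''\}$ with $\nu''<\nu_0$ it is bounded by $1/c$: indeed maximal monotonicity of $A$ together with $\Re sM(s^{-1})\ge c$ (this is Hypothesis (c) rewritten via $z=s^{-1}$) yields $\Re\langle(sM(s^{-1})+A)x|x\rangle\ge c|x|^2$, hence bounded invertibility with norm at most $1/c$ by the Lax--Milgram/Minty argument underlying \prettyref{thm:sol_theo}. Now fix $0\le\nu<\nu_0$ and $f\in H_{-\nu,0}(\mathbb{R};H)\cap H_{\rho,0}(\mathbb{R};H)$; by \prettyref{rem: interpol} (b), $f\in H_{\mu,0}(\mathbb{R};H)$ for all $\mu\in[-\nu,\rho]$. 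Given $\mu\in(-\nu,\rho]$ pick $\nu''\in(\nu,\nu_0)$, so the closed strip $\{\Re s\in[\mu,\rho]\}$ lies in $\{\Re s>-\nu''\}$ and $T$ is bounded, continuous, and analytic there; \prettyref{lem:shift_2} (applied to $T$ with the weights $\mu<\rho$, its two roles interchanged) gives
\[
u=\mathcal{L}_\rho^\ast T(\i m+\rho)\mathcal{L}_\rho f=\mathcal{L}_\mu^\ast T(\i m+\mu)\mathcal{L}_\mu f
\]
almost everywhere, and the right-hand side lies in $H_{\mu,0}(\mathbb{R};H)$ since $T$ is bounded. As $\mu\in(-\nu,\rho]$ was arbitrary, $u\in\bigcap_{-\nu<\mu\le\rho}H_{\mu,0}(\mathbb{R};H)$, i.e.\ exponential stability with rate $\nu_0$.

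The hard part, I expect, is the first ingredient: verifying that the Möbius-shifted symbol $M_{\nu'}$ simultaneously inherits boundedness and analyticity (making it an admissible material law) \emph{and} the strict positivity \prettyref{eq:solv}, uniformly in $r>0$. This is where Hypotheses (a)--(c) are genuinely used in concert, and where one must track precisely how each disk $B_{\mathbb{C}}(r,r)$ is carried, under $z\mapsto z(1-\nu'z)^{-1}$, into an exterior region $\mathbb{C}\setminus B_{\mathbb{C}}\!\left[-\tfrac{1}{2\nu'},\tfrac{1}{2\nu'}\right]$ on which the lower bounds of (c) are available. Once this is in place, the conjugation and the passage between weights through \prettyref{lem:shift_1}--\prettyref{lem:shift_2} are essentially routine.
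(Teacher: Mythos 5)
Your proposal is correct and follows essentially the same route as the paper: both hinge on the M\"obius-shifted material law $(1-\nu z)M\left(z(1-\nu z)^{-1}\right)$ together with hypotheses (b) and (c) to retain the solvability condition, and on \prettyref{lem:shift_2} to move between weights. The only organizational difference is that the paper performs its contour shifts entirely within $\{\Re s>0\}$ and transports the result to negative weights via the auxiliary solution $v$ and the unitary $e^{\nu m}$, whereas you shift the symbol $T(s)=\left(sM(s^{-1})+A\right)^{-1}$ once, directly across $\Re s=0$ down to $\Re s=\mu>-\nu$, which works because, as you note, hypothesis (c) and the maximal monotonicity of $A$ bound $T$ by $1/c$ on that strip.
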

\begin{proof}
Let $\rho>0$, $0\leq\nu<\nu_{0}$ and take $f\in H_{-\nu,0}(\mathbb{R};H)\cap H_{\rho,0}(\mathbb{R};H).$
We set 
\[
u\coloneqq\left(\overline{\partial_{0,\rho}M(\partial_{0,\rho}^{-1})+A}\right)^{-1}f
\]
and we have to show that $u\in H_{\mu,0}(\mathbb{R};H)$ for each
$\mu\in]-\nu,\rho].$ Let $0<\eta<\rho+\nu.$ We define 
\[
\tilde{N}(z)=(1-\nu z)M\left(z\left(1-\nu z\right)^{-1}\right)
\]
for $z\in B_{\mathbb{C}}\left(\frac{1}{2\eta},\frac{1}{2\eta}\right)\setminus\{\nu^{-1}\}.$
Note that according to hypotheses (b), $\tilde{N}$ has an extension
$N\in\mathcal{H}^{\infty}\left(B_{\mathbb{C}}\left(\frac{1}{2\eta},\frac{1}{2\eta}\right);L(H)\right).$
Moreover, 
\begin{equation}
\Re z^{-1}N(z)=\Re z^{-1}\tilde{N}(z)=\Re\frac{(1-\nu z)}{z}M\left(\frac{z}{1-\nu z}\right)\geq c\label{eq:pos_def}
\end{equation}
for every $z\in B_{\mathbb{C}}\left(\frac{1}{2\eta},\frac{1}{2\eta}\right)\setminus\{\nu^{-1}\}$
and some suitable $c>0,$ since $z\left(1-\nu z\right)^{-1}\notin B_{\mathbb{C}}\left[-\frac{1}{2\nu},\frac{1}{2\nu}\right]$.
Due to the continuity of $N$, estimate \prettyref{eq:pos_def} also
holds for $z=\frac{1}{\nu}$. Thus, according to \prettyref{thm:sol_theo},
we obtain a solution 
\[
v\coloneqq\left(\overline{\partial_{0,\eta}N(\partial_{0,\eta}^{-1})+A}\right)^{-1}e^{\nu m}f\in H_{\eta,0}(\mathbb{R};H),
\]
where we have used that $e^{\nu m}f\in H_{0,0}(\mathbb{R};H)\cap H_{\rho+\nu,0}(\mathbb{R};H)\subseteq H_{\eta,0}(\mathbb{R};H)$
(see \prettyref{rem: interpol} (b)). We apply \prettyref{lem:shift_2}
to $T(z)=(zN(z^{-1})+A)^{-1}$ for $z\in\mathbb{C}$ with $\Re z\geq\eta$
and get that 
\begin{align*}
v & =\mathcal{L}_{\eta}^{\ast}\left(\overline{\left(\i m+\eta\right)N\left(\frac{1}{\i m+\eta}\right)+A}\right)^{-1}\mathcal{L}_{\eta}e^{\nu m}f\\
 & =\mathcal{L}_{\eta+\nu}^{\ast}\left(\overline{\left(\i m+\eta+\nu\right)N\left(\frac{1}{\i m+\eta+\nu}\right)+A}\right)^{-1}\mathcal{L}_{\eta+\nu}e^{\nu m}f.
\end{align*}
Using $\left(\i t+\eta+\nu\right)N\left(\frac{1}{\i t+\eta+\nu}\right)=(\i t+\eta)M\left(\frac{1}{\i t+\eta}\right)$
for $t\in\mathbb{R}$, we derive 
\[
e^{-\nu m}v=\mathcal{L}_{\eta}^{\ast}\left(\overline{\left(\i m+\eta\right)M\left(\frac{1}{\i m+\eta}\right)+A}\right)^{-1}\mathcal{L}_{\eta}f.
\]
Again, applying \prettyref{lem:shift_2} to $T(z)=\left(zM(z^{-1})+A\right)^{-1}$
for $z\in\mathbb{C}$ with $\Re z\geq\min\{\rho,\eta\}$ we get that
\begin{align*}
e^{-\nu m}v & =\mathcal{L}_{\eta}^{\ast}\left(\overline{\left(\i m+\eta\right)M\left(\frac{1}{\i m+\eta}\right)+A}\right)^{-1}\mathcal{L}_{\eta}f\\
 & =\mathcal{L}_{\rho}^{\ast}\left(\overline{\left(\i m+\rho\right)M\left(\frac{1}{\i m+\rho}\right)+A}\right)^{-1}\mathcal{L}_{\rho}f\\
 & =u,
\end{align*}
which gives $u\in H_{\eta-\nu}(\mathbb{R};H).$ Since $\eta\in]0,\rho+\nu[$
was chosen arbitrarily, we get the assertion.
\end{proof}

\section{Examples}

In this section we illustrate our results of the previous section
by three different types of differential equations which, however,
are all covered by the abstract notion of evolutionary equations.
We emphasize that we do not claim that in the forthcoming examples
the stability rates are optimal under the given constraints nor that
an exponential decay could not be obtained under lesser constraints.
But we emphasize that our approach provides a unified way to study
exponential stability of a broad class of differential equations.\\
We begin to study a class of differential-algebraic equations, where
the material law is of the simplest form. Moreover, we provide a strategy
of how to deal with initial values problems for this class. As a second
example we treat a partial differential-algebraic equation with finite
delay. We conclude this section with an example of a parabolic integro-differential
equation with an operator-valued kernel.

\subsection{Differential-algebraic equations of mixed type}

It turns out that in applications, the material law is often of the
form $M(\partial_{0,\rho}^{-1})=M_{0}+\partial_{0,\rho}^{-1}M_{1}$
(see for instance \cite{Picard,Picard_McGhee,Trostorff2012_NA}),
where $M_{0},M_{1}\in L(H)$. The corresponding evolutionary equation
is then of the form 
\begin{equation}
\left(\partial_{0,\rho}M_{0}+M_{1}+A\right)u=f,\label{eq:diff-alg-1}
\end{equation}

where $A:D(A)\subseteq H\to H$ is a maximal monotone linear operator.
In order to obtain the well-posedness of this evolutionary equation
we require that $M_{0}$ is selfadjoint and strictly positive definite
on its range, while $\Re M_{1}\coloneqq\frac{1}{2}(M_{1}+M_{1}^{\ast})$
is strictly positive definite on the kernel of $M_{0}$ (see \cite{Picard,Trostorff2012_NA,Trostorff2012_nonlin_bd}
for the proof of well-posedness). In order to obtain exponential stability
for this problem, we require that $\Re M_{1}$ is strictly positive
definite on the whole space $H.$ 
\begin{thm}
\label{thm:diff_alg}Let $A:D(A)\subseteq H\to H$ be a maximal monotone
linear operator and $M_{0},M_{1}\in L(H)$ such that $M_{0}$ is selfadjoint
and strictly positive definite on its range, and $\Re M_{1}\geq c>0$.
Then for each $\rho>0$ the solution operator $\left(\overline{\partial_{0,\rho}M_{0}+M_{1}+A}\right)^{-1}$
is exponentially stable with stability rate $\frac{c}{\|M_{0}\|}$.\end{thm}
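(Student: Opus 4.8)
The plan is to recognize the material law of \prettyref{eq:diff-alg-1} as $M(z)=M_{0}+zM_{1}$, so that $M(\partial_{0,\rho}^{-1})=M_{0}+\partial_{0,\rho}^{-1}M_{1}$, to set $\nu_{0}:=\frac{c}{\|M_{0}\|}$, and then to check that this affine $M$ satisfies Hypotheses (a)--(c) with exactly this $\nu_{0}$. Once that is done, \prettyref{thm:stability} applies verbatim and delivers exponential stability with stability rate $\nu_{0}=\frac{c}{\|M_{0}\|}$, which is the assertion.

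Hypotheses (a) and (b) are immediate for an affine $M$. Since $M(z)=M_{0}+zM_{1}$ is entire, it is in particular analytic on $\mathbb{C}\setminus B_{\mathbb{C}}[-\frac{1}{2\nu_{0}},\frac{1}{2\nu_{0}}]$, which is (a). For (b) I would compute, for $0\le\nu<\nu_{0}$,
\[
(1-\nu z)M\!\left(\frac{z}{1-\nu z}\right)=(1-\nu z)M_{0}+zM_{1}=M_{0}+z(M_{1}-\nu M_{0}),
\]
which is again affine, hence entire; in particular it is bounded and analytic on every bounded ball $B_{\mathbb{C}}(r,r)$, and the apparent pole at $z=\nu^{-1}$ is removable. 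Thus the required bounded analytic extension exists.

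The substantive point is Hypothesis (c). Since $M_{0}$ is selfadjoint, for any scalar $\lambda\in\mathbb{C}$ one has $\Re(\lambda M_{0})=(\Re\lambda)M_{0}$; taking $\lambda=z^{-1}$ gives
\[
\Re z^{-1}M(z)=\Re(z^{-1})\,M_{0}+\Re M_{1}.
\]
The geometric ingredient is that on the relevant set one has $\Re(z^{-1})>-\nu$: writing $z=x+\i y$, the condition $z\notin B_{\mathbb{C}}[-\frac{1}{2\nu},\frac{1}{2\nu}]$ reads $|z+\frac{1}{2\nu}|^{2}>\frac{1}{4\nu^{2}}$, which rearranges to $\nu|z|^{2}>-x$, i.e.\ $\Re(z^{-1})=x/|z|^{2}>-\nu$. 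Because $M_{0}$ is selfadjoint, zero on its kernel and strictly positive on its range, it is positive semidefinite, so $0\le M_{0}\le\|M_{0}\|\,\identity$. Combining this two-sided bound with $\Re(z^{-1})>-\nu$ yields $\Re(z^{-1})M_{0}\ge-\nu\|M_{0}\|\,\identity$, and hence, using $\Re M_{1}\ge c>0$,
\[
\Re z^{-1}M(z)\ge\left(c-\nu\|M_{0}\|\right)\identity .
\]
For every $0<\nu<\nu_{0}=\frac{c}{\|M_{0}\|}$ the constant $c':=c-\nu\|M_{0}\|$ is strictly positive, which is precisely Hypothesis (c).

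The main (and essentially only) obstacle is the operator estimate $\Re(z^{-1})M_{0}\ge-\nu\|M_{0}\|\,\identity$ in the regime $\Re(z^{-1})<0$: there one must multiply the bound $M_{0}\le\|M_{0}\|\,\identity$ by the negative scalar $\Re(z^{-1})$ (reversing the inequality) and then invoke $\Re(z^{-1})>-\nu$. It is exactly this step that forces the threshold $\nu_{0}=\frac{c}{\|M_{0}\|}$ and thereby pins down the claimed stability rate. With (a)--(c) verified, \prettyref{thm:stability} completes the proof.
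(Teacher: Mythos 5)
Your proposal is correct and follows essentially the same route as the paper: verify Hypotheses (a)--(c) for the affine law $M(z)=M_{0}+zM_{1}$, with the key step being the estimate $\Re z^{-1}M(z)=\Re(z^{-1})M_{0}+\Re M_{1}\geq c-\nu\|M_{0}\|>0$ on $\mathbb{C}\setminus B_{\mathbb{C}}\left[-\frac{1}{2\nu},\frac{1}{2\nu}\right]$, and then invoke Theorem \ref{thm:stability}. Your explicit derivation of $\Re(z^{-1})>-\nu$ and of $0\leq M_{0}\leq\|M_{0}\|\identity$ just fills in details the paper states without computation.
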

\begin{proof}
We have to verify the hypotheses (a)-(c) for the function 
\[
M(z)=M_{0}+zM_{1}\quad(z\in\mathbb{C}).
\]
Obviously the assumption (a) holds. Let now $r>0$ and $\nu\geq0.$
Then we compute 
\[
(1-\nu z)M\left(z(1-\nu z)^{-1}\right)=(1-\nu z)M_{0}+zM_{1}
\]
for $z\in B_{\mathbb{C}}(r,r)\setminus\{\nu^{-1}\},$ which shows
(b). Let now $\nu\in]0,\frac{c}{\|M_{0}\|}[.$ In order to show (c)
we note, that for $z\in\mathbb{C}\setminus B_{\mathbb{C}}\left[-\frac{1}{2\nu},\frac{1}{2\nu}\right]$
there exists $t\in\mathbb{R}$ and $\rho>-\nu$ such that $z^{-1}=\i t+\rho.$
Thus, for $\rho\geq0$ we can estimate 
\[
\Re z^{-1}M(z)=\rho M_{0}+\Re M_{1}\geq c,
\]
while for $\rho\in\left]-\nu,0\right[$ we estimate 
\[
\Re z^{-1}M(z)=\rho M_{0}+\Re M_{1}\geq-\nu\|M_{0}\|+c>0.
\]
Thus, the assertion follows by \prettyref{thm:stability}
\end{proof}
To illustrate the versatility of our approach, we discuss the following
simple example of an evolutionary equation, whose type (elliptic,
parabolic or hyperbolic) changes on different parts of the underlying
domain. A similar example was also discussed in \cite{Picard2012_comprehensive_control,Picard2013_nonauto}.
\begin{example}
Let $\Omega\subseteq\mathbb{R}^{n}$ and $\Omega_{0},\Omega_{1}\subseteq\Omega$
be measurable disjoint subsets with positive Lebesgue measure. We
consider the evolutionary equation
\begin{equation}
\left(\partial_{0,\rho}\left(\begin{array}{cc}
\chi_{\Omega_{0}}+\chi_{\Omega_{1}} & 0\\
0 & \chi_{\Omega_{0}}
\end{array}\right)+\left(\begin{array}{cc}
c & 0\\
0 & c
\end{array}\right)+\left(\begin{array}{cc}
0 & \dive_{c}\\
\grad & 0
\end{array}\right)\right)\left(\begin{array}{c}
v\\
q
\end{array}\right)=\left(\begin{array}{c}
f\\
g
\end{array}\right),\label{eq:diff-alg}
\end{equation}
where $c>0$. The differential operator $\dive_{c}$ is defined as
the closure of the operator 
\begin{align*}
\dive|_{C_{c}^{\infty}(\Omega)^{n}}:C_{c}^{\infty}(\Omega)^{n}\subseteq L_{2}(\Omega)^{n} & \to L_{2}(\Omega)\\
\left(\phi_{i}\right)_{i\in\{1,\ldots,n\}} & \mapsto\sum_{i=1}^{n}\partial_{i}\phi_{i},
\end{align*}
where we denote by $\partial_{i}$ the partial derivative with respect
to the $i$-th coordinate. The operator $\grad$ is defined as the
negative adjoint of $\dive_{c},$ i.e. 
\[
\grad\coloneqq-\left(\dive_{c}\right)^{\ast}.
\]
This operator is just the usual weak gradient on $L_{2}(\Omega)$
with domain $H^{1}(\Omega).$ Note that then the operator matrix $\left(\begin{array}{cc}
0 & \dive_{c}\\
\grad & 0
\end{array}\right)$ is a skew-selfadjoint operator (and hence maximal monotone) on $H\coloneqq L_{2}(\Omega)\oplus L_{2}(\Omega)^{n}.$
Moreover, the operators 
\[
M_{0}=\left(\begin{array}{cc}
\chi_{\Omega_{0}}+\chi_{\Omega_{1}} & 0\\
0 & \chi_{\Omega_{0}}
\end{array}\right),\quad M_{1}=\left(\begin{array}{cc}
c & 0\\
0 & c
\end{array}\right)
\]
satisfy the assumptions of \prettyref{thm:diff_alg} and hence, the
solution operator is exponentially stable with stability rate $c.$
\\
Although this example seems to be quite easy, it seems hard to attack
the problem of solving \prettyref{eq:diff-alg-1} by using semigroup
techniques. The reason for that is that \prettyref{eq:diff-alg-1}
changes its type on different parts of the domain $\Omega.$ Indeed,
on $\Omega_{0}$ we obtain a hyperbolic problem of the form 
\[
\left(\partial_{0,\rho}\left(\begin{array}{cc}
1 & 0\\
0 & 1
\end{array}\right)+\left(\begin{array}{cc}
c & 0\\
0 & c
\end{array}\right)+\left(\begin{array}{cc}
0 & \dive_{c}\\
\grad & 0
\end{array}\right)\right)\left(\begin{array}{c}
v\\
q
\end{array}\right)=\left(\begin{array}{c}
f\\
g
\end{array}\right),
\]
while on $\Omega_{1}$ the problem becomes parabolic, namely 
\[
\left(\partial_{0,\rho}\left(\begin{array}{cc}
1 & 0\\
0 & 0
\end{array}\right)+\left(\begin{array}{cc}
c & 0\\
0 & c
\end{array}\right)+\left(\begin{array}{cc}
0 & \dive_{c}\\
\grad & 0
\end{array}\right)\right)\left(\begin{array}{c}
v\\
q
\end{array}\right)=\left(\begin{array}{c}
f\\
g
\end{array}\right),
\]
which yields, in case of $g=0$ the parabolic differential equation
\[
\partial_{0,\rho}v+cv-c^{-1}\dive_{c}\grad v=f.
\]
On the remaining part $\Omega\setminus(\Omega_{0}\cup\Omega_{1})$
the problem is elliptic
\[
\left(\left(\begin{array}{cc}
c & 0\\
0 & c
\end{array}\right)+\left(\begin{array}{cc}
0 & \dive_{c}\\
\grad & 0
\end{array}\right)\right)\left(\begin{array}{c}
v\\
q
\end{array}\right)=\left(\begin{array}{c}
f\\
g
\end{array}\right).
\]
Note that we can treat this problem, without requiring any explicit
transmission conditions on the interfaces $\partial\Omega_{0}$ and
$\partial\Omega_{1}$ and without imposing regularity assumptions
on these boundaries. 
\end{example}

\subsubsection*{Initial value problems}

Now, we present a possible way to tackle initial value problems for
equations of the form \prettyref{eq:diff-alg-1}. Consider the following
initial value problem%
\footnote{Note that it only makes sense to prescribe an initial value for $M_{0}u$
yielding an initial value for the part of $u$ lying in the range
of $M_{0}.$ %
} 
\begin{align}
\left(\partial_{0,\rho}M_{0}+M_{1}+A\right)u & =f\quad\mbox{ on }]0,\infty[\label{eq:ivp}\\
M_{0}u(0+) & =M_{0}u_{0}\nonumber 
\end{align}
for $M_{0},M_{1},A$ as before, $f\in H_{-\nu,0}(\mathbb{R};H)\cap H_{\rho,0}(\mathbb{R};H)$
for some $\nu,\rho>0$ with $\spt f\subseteq[0,\infty[$ and $u_{0}\in D(A).$
One way to deal with such problems is to consider the evolutionary
equation 
\[
\left(\partial_{0,\rho}M_{0}+M_{1}+A\right)\tilde{v}=f-\chi_{[0,\infty[}(m)M_{1}u_{0}-\chi_{[0,\infty[}(m)Au_{0}\quad\mbox{ on }\mathbb{R},
\]
for the new unknown $\tilde{v}\coloneqq u-\chi_{[0,\infty[}(m)u_{0}$
given by $\tilde{v}(t)=u(t)-\chi_{[0,\infty[}(t)u_{0}$ for almost
every $t\in\mathbb{R}$. Then the right-hand side belongs to $H_{\rho,0}(\mathbb{R};H)$
for positive $\rho,$ but does not decay if $f$ decays. Hence, this
approach can be used for the issue of well-posedness but it is not
appropriate for exponential stability. \\
Instead, we consider an alternative problem for the unknown 
\[
v\coloneqq u-\phi(m)u_{0},
\]
where $\phi$ is given by 
\[
\phi(t)\coloneqq\begin{cases}
1 & \mbox{ if }t\in[0,1],\\
2-t & \mbox{ if }t\in]1,2[,\\
0 & \mbox{ otherwise.}
\end{cases}
\]
It is clear that if $u$ satisfies \prettyref{eq:ivp}, then $v$
satisfies the equation 
\begin{equation}
\left(\partial_{0,\rho}M_{0}+M_{1}+A\right)v=f+\chi_{]1,2[}(m)M_{0}u_{0}-\phi(m)M_{1}u_{0}-\phi(m)Au_{0}\eqqcolon g\label{eq:ivp_modified}
\end{equation}
and vice versa. Since the function $\chi_{]1,2[}(m)M_{0}u_{0}-\phi(m)M_{1}u_{0}-\phi(m)Au_{0}$
belongs to $H_{\mu,0}(\mathbb{R};H)$ for every $\mu\in\mathbb{R}$,
we obtain $g\in H_{\rho,0}(\mathbb{R};H)\cap H_{-\nu,0}(\mathbb{R};H)$.
Thus, \prettyref{thm:diff_alg} applies to \prettyref{eq:ivp_modified}
and we get that \prettyref{eq:ivp_modified} is well-posed and $v\in H_{\mu,0}(\mathbb{R};H)$
for each $\mu\in]-\nu,\rho].$ This gives that $u\in H_{\mu,0}(\mathbb{R};H)$
for each $\mu\in]-\nu,\rho],$ since $\phi(m)u_{0}\in\bigcap_{\mu\in\mathbb{R}}H_{\mu,0}(\mathbb{R};H).$
It is left to show in which sense $M_{0}u$ attains the initial value
$M_{0}u_{0}.$ By \prettyref{eq:ivp_modified} we get that 
\[
\partial_{0,\rho}M_{0}v=g-M_{1}v-Av
\]
and the right hand side belongs to $H_{\rho,0}(\mathbb{R};H_{-1}(A+1)),$
where $H_{-1}(A+1)$ is the extrapolation space%
\footnote{For a boundedly invertible linear operator $C:D(C)\subseteq H\to H$
the extrapolation space $H_{-1}(C)$ is given as the completion of
$H$ with respect to the norm $|\cdot|_{H_{-1}(C)}$ defined as $|x|_{H_{-1}(C)}\coloneqq|C^{-1}x|_{H}$
for $x\in H$ (see \cite[Section 2.1]{Picard_McGhee}). %
} associated with $A+1$. Thus, a version of Sobolev's embedding theorem
(\cite[Lemma 3.1.59]{Picard_McGhee}, \cite[Lemma 5.2]{Kalauch2011})
yields that $M_{0}v$ is continuous as a function which attains values
in $H_{-1}(A+1).$ Furthermore, due to the causality of the solution
operator $\left(\overline{\partial_{0,\rho}M_{0}+M_{1}+A}\right)^{-1}$,
$v$ is supported on $[0,\infty[,$ since $\spt g\subseteq[0,\infty[.$
This yields $M_{0}v(0+)=M_{0}v(0-)=0$ and hence, since $\phi(0+)=1$
we get that $M_{0}u(0+)=M_{0}u_{0}$, where the equality holds in
$H_{-1}(A+1).$

\subsection{Linear partial differential equations with finite delay}

As a second example we study a differential equation with finite delay
of the form 
\begin{equation}
\left(\partial_{0,\rho}M_{0}+\tau_{h}+M_{1}+A\right)u=f,\label{eq:delay}
\end{equation}
where $M_{0},M_{1}\in L(H)$ such that $M_{0}$ is selfadjoint and
non-negative, $\Re M_{1}\geq c>1$, $A:D(A)\subseteq H\to H$ is linear
and maximal monotone and $\tau_{h}$ is the translation operator with
respect to time, i.e. $\left(\tau_{h}u\right)(t)=u(t+h)$ for $t\in\mathbb{R}$
and some $h\leq0.$ We will prove that under these assumptions, the
corresponding solution operator is exponentially stable and we give
an estimate for the stability rate. A similar problem is treated in
\cite[Example 4.14]{Batkai_2005} for a particular operator $A$,
where the well-posedness is shown via semigroups and a criterion for
the exponential stability is given, using the Spectral Mapping Theorem
for eventually norm continuous semigroups (cf. \cite[p. 280]{engel2000one}).

Before we state our stability result for \prettyref{eq:delay}, we
need to inspect the operator $\tau_{h}$ a bit closer.
\begin{lem}
Let $\rho,h\in\mathbb{R}.$ We define the operator 
\begin{align*}
\tau_{h}\colon H_{\rho,0}(\mathbb{R};H) & \to H_{\rho,0}(\mathbb{R};H)\\
u & \mapsto\left(t\mapsto u(t+h)\right).
\end{align*}
Then $\tau_{h}\in L(H_{\rho,0}(\mathbb{R};H))$ with $\|\tau_{h}\|=e^{\rho h}$
and $\tau_{h}=\mathcal{L}_{\rho}^{\ast}e^{\left(\i m+\rho\right)h}\mathcal{L}_{\rho}$.\end{lem}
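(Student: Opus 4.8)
The plan is to establish the three claims in the order in which they are stated: first that $\tau_{h}$ is a well-defined bounded operator with $\|\tau_{h}\|=e^{\rho h}$, then the intertwining identity $\tau_{h}=\mathcal{L}_{\rho}^{\ast}e^{(\i m+\rho)h}\mathcal{L}_{\rho}$, and finally to argue that these fit together by density. I do not expect a genuine obstacle; the whole statement is a change-of-variables computation dressed up with a density argument, and the only points demanding care are where the exponential weight $e^{-\rho t}$ enters.

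For the boundedness and the norm I would argue directly. For $u\in H_{\rho,0}(\mathbb{R};H)$ the function $t\mapsto u(t+h)$ is measurable, and the substitution $s=t+h$ gives
\[
\int_{\mathbb{R}}|u(t+h)|^{2}e^{-2\rho t}\dd t=\int_{\mathbb{R}}|u(s)|^{2}e^{-2\rho(s-h)}\dd s=e^{2\rho h}\int_{\mathbb{R}}|u(s)|^{2}e^{-2\rho s}\dd s.
\]
This shows at once that $\tau_{h}u\in H_{\rho,0}(\mathbb{R};H)$, that $\tau_{h}$ is linear and bounded, and that $|\tau_{h}u|_{H_{\rho,0}(\mathbb{R};H)}=e^{\rho h}|u|_{H_{\rho,0}(\mathbb{R};H)}$ for every $u$. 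Since equality holds for all $u$, the operator norm is exactly $e^{\rho h}$.

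For the spectral representation I would first verify the identity on test functions, where the defining integral for $\mathcal{L}_{\rho}$ converges absolutely. For $\phi\in C_{c}^{\infty}(\mathbb{R};H)$ and $x\in\mathbb{R}$, the same substitution $s=t+h$ in
\[
(\mathcal{L}_{\rho}\tau_{h}\phi)(x)=\frac{1}{\sqrt{2\pi}}\int_{\mathbb{R}}e^{-\i xt}e^{-\rho t}\phi(t+h)\dd t
\]
pulls out the factor $e^{(\i x+\rho)h}$ and leaves $(\mathcal{L}_{\rho}\phi)(x)$, so that
\[
(\mathcal{L}_{\rho}\tau_{h}\phi)(x)=e^{(\i x+\rho)h}(\mathcal{L}_{\rho}\phi)(x)=\bigl(e^{(\i m+\rho)h}\mathcal{L}_{\rho}\phi\bigr)(x).
\]
Here $e^{(\i m+\rho)h}$ is the multiplication operator by $x\mapsto e^{(\i x+\rho)h}$, which is bounded on $L_{2}(\mathbb{R};H)$ because its modulus $e^{\rho h}$ is independent of $x$. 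Applying the unitary $\mathcal{L}_{\rho}^{\ast}$ yields $\tau_{h}\phi=\mathcal{L}_{\rho}^{\ast}e^{(\i m+\rho)h}\mathcal{L}_{\rho}\phi$ for all $\phi\in C_{c}^{\infty}(\mathbb{R};H)$.

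Finally I would close the argument by density: both $\tau_{h}$ and $\mathcal{L}_{\rho}^{\ast}e^{(\i m+\rho)h}\mathcal{L}_{\rho}$ are bounded operators on $H_{\rho,0}(\mathbb{R};H)$, and they agree on the dense subspace $C_{c}^{\infty}(\mathbb{R};H)$, hence coincide. The one subtlety worth flagging is that $\mathcal{L}_{\rho}$ is defined on $H_{\rho,0}(\mathbb{R};H)$ only as the unitary extension of the explicit integral on test functions, so the intertwining identity must be checked on $C_{c}^{\infty}(\mathbb{R};H)$ first and then propagated; and it is precisely the weight $e^{-\rho t}$ built into $\mathcal{L}_{\rho}$ that converts the translation $\tau_{h}$ into multiplication by $e^{(\i x+\rho)h}$ rather than by the unimodular factor $e^{\i x h}$ one would get from the ordinary Fourier transform.
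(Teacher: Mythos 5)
Your proof is correct and follows essentially the same route as the paper: the identity is verified on $C_{c}^{\infty}(\mathbb{R};H)$ by the substitution $s=t+h$ in the defining integral of $\mathcal{L}_{\rho}$ and then extended by density. The only (immaterial) difference is that you compute the norm directly by a change of variables, whereas the paper reads it off afterwards from the unitary equivalence $\|\tau_{h}\|=\|e^{(\i m+\rho)h}\|=e^{\rho h}$.
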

\begin{proof}
Obviously, $\tau_{h}$ defines a bounded linear operator on $H_{\rho,0}(\mathbb{R};H).$
For $\phi\in C_{c}^{\infty}(\mathbb{R};H)$ we compute 
\begin{align*}
\mathcal{L}_{\rho}\left(\tau_{h}\phi\right)(t) & =\frac{1}{\sqrt{2\pi}}\intop_{\mathbb{R}}e^{-\i st}e^{-\rho s}\phi(s+h)\mbox{ d}s\\
 & =e^{\left(\i t+\rho\right)h}\frac{1}{\sqrt{2\pi}}\intop_{\mathbb{R}}e^{-\i st}e^{-\rho s}\phi(s)\mbox{ d}s\\
 & =e^{\left(\i t+\rho\right)h}\left(\mathcal{L}_{\rho}\phi\right)(t)
\end{align*}
for each $t\in\mathbb{R},$ which gives $\tau_{h}=\mathcal{L}_{\rho}^{\ast}e^{\left(\i m+\rho\right)h}\mathcal{L}_{\rho}$.
Moreover, by this unitary equivalence we get that 
\[
\|\tau_{h}\|=\|e^{\left(\i m+\rho\right)h}\|=e^{\rho h}.\tag*{\qedhere}
\]

\end{proof}
Using this lemma, we are able to write \prettyref{eq:delay} as an
evolutionary equation with 
\begin{equation}
M(z)=M_{0}+ze^{z^{-1}h}+zM_{1},\label{eq:material_law_delay}
\end{equation}
which is clearly analytic and bounded on balls of the form $B_{\mathbb{C}}(r,r)$
for $r>0$ if we require that $h\leq0.$ This restriction is natural,
since for $h\leq0$ the operator $\tau_{h}$ is forward causal, while
for $h>0$ it is backward causal. 
\begin{thm}
\label{thm:stability_delay}Let $A:D(A)\subseteq H\to H$ be a maximal
monotone linear operator, $M_{0},M_{1}\in L(H)$ such that $M_{0}$
is selfadjoint and non-negative and $\Re M_{1}\geq c>1.$ Moreover
let $h<0.$ Then for each $\rho>0$ the solution operator $\left(\overline{\partial_{0,\rho}M_{0}+\tau_{h}+M_{1}+A}\right)^{-1}$
is exponentially stable with stability rate $\nu_{0}>0$ satisfying
\[
\nu_{0}\|M_{0}\|+e^{-\nu_{0}h}=c.
\]
\end{thm}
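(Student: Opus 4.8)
The plan is to recognise \prettyref{eq:delay} as an evolutionary equation with the material law $M(z)=M_{0}+ze^{z^{-1}h}+zM_{1}$ from \prettyref{eq:material_law_delay} and to verify that this $M$ satisfies the hypotheses (a)--(c) for the particular $\nu_{0}$ determined by $\nu_{0}\|M_{0}\|+e^{-\nu_{0}h}=c$; the conclusion then follows from \prettyref{thm:stability}. First I would record that such a $\nu_{0}>0$ exists and is unique: the map $\nu\mapsto\nu\|M_{0}\|+e^{-\nu h}$ is strictly increasing, since its derivative $\|M_{0}\|-he^{-\nu h}$ is positive because $h<0$; it takes the value $1$ at $\nu=0$ and tends to $\infty$. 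As $c>1$ by hypothesis, it attains the value $c$ at exactly one positive $\nu_{0}$. This is precisely the point where the assumption $c>1$ enters.

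Hypothesis (a) is immediate, since $z\mapsto e^{z^{-1}h}$ is analytic on $\mathbb{C}\setminus\{0\}$ and the excluded closed ball $B_{\mathbb{C}}\left[-\frac{1}{2\nu_{0}},\frac{1}{2\nu_{0}}\right]$ contains $0$. For (b) I would compute, using $\left(z(1-\nu z)^{-1}\right)^{-1}=z^{-1}-\nu$, that
\[
(1-\nu z)M\left(z(1-\nu z)^{-1}\right)=(1-\nu z)M_{0}+ze^{-\nu h}e^{z^{-1}h}+zM_{1}.
\]
The apparent singularity at $z=\nu^{-1}$ has cancelled, so the right-hand side is analytic on all of $B_{\mathbb{C}}(r,r)$ and gives the required extension (the case $\nu=0$ being subsumed via the stated convention). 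Boundedness is the only estimate: the factors $(1-\nu z)$ and $z$ are bounded on $B_{\mathbb{C}}(r,r)$, and since inversion maps $B_{\mathbb{C}}(r,r)$ onto the half-plane $\{\Re w>\frac{1}{2r}\}$, one has $\Re z^{-1}>0$ there, whence $|e^{z^{-1}h}|=e^{h\Re z^{-1}}\leq1$ because $h<0$.

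The heart of the argument is (c). Fixing $0<\nu<\nu_{0}$ and writing $z^{-1}=\i t+\rho$ with $\rho>-\nu$ for $z\in\mathbb{C}\setminus B_{\mathbb{C}}\left[-\frac{1}{2\nu},\frac{1}{2\nu}\right]$, exactly as in the proof of \prettyref{thm:diff_alg}, I would compute $z^{-1}M(z)=z^{-1}M_{0}+e^{z^{-1}h}+M_{1}$ and take real parts. Selfadjointness of $M_{0}$ gives $\Re(z^{-1}M_{0})=\rho M_{0}$, and since $0\le M_{0}\le\|M_{0}\|$ the worst case $\rho\in\,]-\nu,0[$ yields $\rho M_{0}\geq-\nu\|M_{0}\|$. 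Moreover $\Re e^{z^{-1}h}=e^{\rho h}\cos(th)\geq-e^{\rho h}$, and $\rho>-\nu$ together with $h<0$ gives $e^{\rho h}<e^{-\nu h}$; combining this with $\Re M_{1}\geq c$ produces
\[
\Re z^{-1}M(z)=\rho M_{0}+e^{\rho h}\cos(th)+\Re M_{1}\geq c-\nu\|M_{0}\|-e^{-\nu h}.
\]
It then remains to see that the right-hand constant is positive for $\nu<\nu_{0}$: the function $g(\nu)=c-\nu\|M_{0}\|-e^{-\nu h}$ is strictly decreasing (the same derivative computation as above) and vanishes at $\nu_{0}$, so $g(\nu)>g(\nu_{0})=0$. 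Hence (c) holds with this positive constant, and \prettyref{thm:stability} delivers the claimed exponential stability with rate $\nu_{0}$.

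I expect the main obstacle to be the bookkeeping in (c): correctly reducing to the line $z^{-1}=\i t+\rho$, tracking the sign of $\rho$ in the term $\rho M_{0}$, and recognising that the defining equation for $\nu_{0}$ is exactly the threshold at which the lower bound $g(\nu)$ loses positivity. The remaining points---analyticity, the cancellation of the singularity at $z=\nu^{-1}$, and the estimate $|e^{z^{-1}h}|\le1$---are routine once the mapping properties of $z\mapsto z^{-1}$ on the relevant balls are invoked.
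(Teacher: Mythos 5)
Your proposal is correct and follows essentially the same route as the paper: it identifies the material law $M(z)=M_{0}+ze^{z^{-1}h}+zM_{1}$, verifies hypotheses (a)--(c) with the same computation $\Re z^{-1}M(z)=\rho M_{0}+e^{\rho h}\cos(th)+\Re M_{1}$ on the line $z^{-1}=\i t+\rho$, $\rho>-\nu$, and invokes \prettyref{thm:stability}. Your only additions --- the monotonicity argument showing that $\nu_{0}$ exists and is unique (which is where $c>1$ enters), and the unified lower bound $c-\nu\Abs{M_{0}}-e^{-\nu h}$ in place of the paper's case split between $\rho\geq0$ and $\rho\in\left]-\nu,0\right[$ --- are correct minor refinements rather than a different approach.
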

\begin{proof}
We have to show that $M$ given by \prettyref{eq:material_law_delay}
satisfies the hypotheses (a)-(c) of Section 3. Obviously $M$ is analytic
on $\mathbb{C}\setminus\{0\},$ which shows (a). Let now $r>0$ and
$\nu\geq0$. As 
\[
(1-\nu z)M\left(z(1-\nu z)^{-1}\right)=(1-\nu z)M_{0}+ze^{(z^{-1}-\nu)h}+zM_{1}
\]
for $z\in B_{\mathbb{C}}(r,r)\setminus\{\nu^{-1}\}$, we see that
$z\mapsto(1-\nu z)M\left(z(1-\nu z)^{-1}\right)$ has an analytic
extension to $B_{\mathbb{C}}(r,r).$ Moreover we estimate 
\[
\sup_{z\in B_{\mathbb{C}}(r,r)}|e^{(z^{-1}-\nu)h}|=\sup_{\rho>\frac{1}{2r}}e^{(\rho-\nu)h},
\]
which is finite, since $h<0.$ Thus, hypothesis (b) is also satisfied.
For showing (c), let $\nu\in]0,\nu_{0}[$ and $z\in\mathbb{C}\setminus B_{\mathbb{C}}\left[-\frac{1}{2\nu},\frac{1}{2\nu}\right].$
Then there exists $\rho>-\nu$ and $t\in\mathbb{R}$ such that $z^{-1}=\i t+\rho.$
If $\rho\geq0$ we estimate 
\begin{align*}
\Re z^{-1}M(z) & =\rho M_{0}+e^{\rho h}\cos(th)+\Re M_{1}\geq c-1>0,
\end{align*}
and for the case $\rho\in]-\nu,0[$ we get 
\[
\Re z^{-1}M(z)=\rho M_{0}+e^{\rho h}\cos(th)+\Re M_{1}\geq-\nu\|M_{0}\|-e^{-\nu h}+c>0,
\]
since $\nu<\nu_{0}.$ This proves (c) and thus, the assertion follows
by \prettyref{thm:stability}.\end{proof}
\begin{rem}
Note that $M_{0}$ in \prettyref{eq:delay} is allowed to have a non-trivial
kernel which could also depend on the spatial variable (compare Subsection
4.1). Thus, \prettyref{thm:stability_delay} also covers a certain
class of differential-algebraic equations with delay.
\end{rem}

\subsection{Parabolic integro-differential equations}

We consider the following parabolic integro-differential equation
\begin{equation}
\partial_{0,\rho}u+Bu-C\ast Bu=f,\label{eq:orig-integro}
\end{equation}
where $B:D(B)\subseteq H\to H$ is linear such that $A\coloneqq B-c$
is maximal monotone for some $c>0$, $C:[0,\infty[\to L(H)$ is a
weakly measurable function such that $t\mapsto\|C(t)\|$ is measurable
and there exists $\nu_{0}>0$ with $\intop_{0}^{\infty}\|C(t)\|e^{\nu_{0}t}\mbox{ d}t<1$.
We set $U\coloneqq\{z\in\mathbb{C}\,|\,\Im z\leq\nu_{0}\}$ and define
the complex Fourier transform of $C$ by 
\[
\hat{C}(z)\coloneqq\frac{1}{\sqrt{2\pi}}\intop_{0}^{\infty}e^{-\i tz}C(t)\mbox{ d}t\quad(z\in U),
\]
where the integral is meant in the weak sense. Note that $\hat{C}:U\to L(H)$
is continuous and bounded on $U$ and analytic in the interior of
$U$%
\footnote{Here we use the fact that scalar analyticity and local boundedness
on a norming set yields analyticity (see \cite[Theorem 3.10.1]{hille1957functional}).%
}. The well-posedness and asymptotic behavior for equations of the
form \prettyref{eq:orig-integro}, including non-linear perturbations,
were discussed in several works (e.g. \cite{Cannarsa2003,Vlasov2010,Trostorff2012_integro}
and \cite{Cannarsa2011,Pruss2009,Berrimi2006,Cavalcanti2003,Trostorff2012_integro}
for a hyperbolic version of the problem), imposing additional constraints
on the kernel $C$. \\
Following \cite{Trostorff2012_integro} we are led to assume that
$C$ satisfies the following conditions:

\begin{enumerate}

\item $C(t)$ is selfadjoint for almost every $t\in\mathbb{R},$\item
$C(t)$ and $C(s)$ commute for almost every $t,s\in\mathbb{R},$\item
for all $t\in\mathbb{R}$ we have 
\begin{equation}
t\Im\hat{C}(t+\i\nu_{0})\leq0.\label{eq:pos_def_conv}
\end{equation}

\end{enumerate}
\begin{rem}
$\,$

\begin{enumerate}[(a)]

\item Note that \prettyref{eq:pos_def_conv} is equivalent to 
\[
\Im\hat{C}(t+\i\nu_{0})\leq0\quad(t\in]0,\infty[).
\]
\item  A typical example for a kernel satisfying the conditions above
is a real-valued, differentiable function $k:[0,\infty[\to[0,\infty[$
with $\intop_{0}^{\infty}k(t)e^{\nu_{0}t}\mbox{ d}t<1$ and $k'(t)\leq-k(t)\nu_{0}$
for every $t\geq0$. Similar kernels were considered by Pr�ss \cite{Pruss2009}
under a weaker constraint on $k'$. Indeed, the conditions 1. and
2. are trivially satisfied, since $k$ is real-valued. For showing
condition 3. we note that 
\[
e^{\nu_{0}t}k(t)-e^{\nu_{0}s}k(s)\leq\sup_{\xi\in[s,t]}e^{\nu_{0}\xi}(\nu_{0}k(\xi)+k'(\xi))\leq0,
\]
for every $t\geq s\geq0$. Thus, the function $t\mapsto e^{\nu_{0}t}k(t)$
is non-increasing and we estimate 
\begin{align*}
\Im\hat{k}(t+\i\nu_{0}) & =\frac{1}{\sqrt{2\pi}}\intop_{0}^{\infty}e^{\nu_{0}s}\sin(-ts)k(s)\mbox{ d}s\\
 & =\frac{1}{\sqrt{2\pi}}\sum_{k=0}^{\infty}\left(\intop_{k\frac{\pi}{t}}^{\left(2k+1\right)\frac{\pi}{t}}e^{\nu_{0}s}\sin(-ts)k(s)\mbox{ d}s+\intop_{\left(2k+1\right)\frac{\pi}{t}}^{2\left(k+1\right)\frac{\pi}{t}}e^{\nu_{0}s}\sin(-ts)k(s)\mbox{ d}s\right)\\
 & =\frac{1}{\sqrt{2\pi}}\sum_{k=0}^{\infty}\intop_{k\frac{\pi}{t}}^{(2k+1)\frac{\pi}{t}}\sin(-ts)\left(e^{\nu_{0}s}k(s)-e^{\nu_{0}(s+\frac{\pi}{t})}k\left(s+\frac{\pi}{t}\right)\right)\mbox{ d}s\\
 & \leq0
\end{align*}
for every $t\in]0,\infty[$ (compare \cite[Remark 3.6 (b)]{Trostorff2012_integro}). 

\item In \cite{Cannarsa2011} the authors considered real-valued
kernels $k:[0,\infty[\to\mathbb{R}$ such that $\intop_{0}^{\infty}k(s)e^{\nu_{0}s}\mbox{ d}s<1$
and the integrated kernel $[0,\infty[\ni t\mapsto\intop_{t}^{\infty}k(s)e^{\nu_{0}s}\mbox{ d}s$
gives rise to a positive definite convolution operator on $L_{2}([0,\infty[)$.
Again, the conditions 1. and 2. are satisfied, since $k$ is real-valued
and condition 3. holds according to \cite[Proposition 2.2 (a)]{Cannarsa2011}.

\end{enumerate}
\end{rem}
Before we can state a stability result for problems of the form \prettyref{eq:orig-integro},
we recall some properties of the convolution operator $C\ast.$
\begin{lem}
\label{lem:conv}We denote by $S(\mathbb{R};H)$ the space of simple
$H$-valued functions. Then for $\rho\geq-\nu_{0}$ the operator 
\begin{align*}
C\ast:S(\mathbb{R};H)\subseteq H_{\rho,0}(\mathbb{R};H) & \to H_{\rho,0}(\mathbb{R};H)\\
u & \mapsto\left(t\mapsto\intop_{\mathbb{R}}C(t-s)u(s)\,\mathrm{d}s\right)
\end{align*}
is bounded and linear with $\|C\ast\|_{L(H_{\rho,0}(\mathbb{R};H))}\leq\intop_{0}^{\infty}\|C(t)\|e^{\nu_{0}t}\,\mathrm{d}t$
and can therefore be extended to $H_{\rho,0}(\mathbb{R};H).$ Moreover,
for $u\in H_{\rho,0}(\mathbb{R};H)$, $\rho\geq-\nu_{0}$ we have
\begin{equation}
\left(\mathcal{L}_{\rho}(C\ast u)\right)(t)=\sqrt{2\pi}\hat{C}(t-\i\rho)\left(\mathcal{L}_{\rho}u(t)\right)\label{eq:spec_conv}
\end{equation}
for almost every $t\in\mathbb{R}$.\end{lem}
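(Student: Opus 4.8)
The plan is to prove the boundedness estimate first and then establish the spectral formula \eqref{eq:spec_conv}, reducing the general case to test functions by density. For the boundedness, I would use Young's inequality for convolutions in the weighted setting. Writing out the norm in $H_{\rho,0}(\mathbb{R};H)$ and substituting, for $u\in S(\mathbb{R};H)$ I compute
\[
\left|\intop_{\mathbb{R}}C(t-s)u(s)\dd s\right|e^{-\rho t}\leq\intop_{\mathbb{R}}\|C(t-s)\|e^{-\rho(t-s)}\,|u(s)|e^{-\rho s}\dd s,
\]
so that, setting $g(r)\coloneqq\|C(r)\|e^{-\rho r}$ and $v(s)\coloneqq|u(s)|e^{-\rho s}$, the weighted norm of $C\ast u$ is bounded by the $L_2$-norm of $g\ast v$. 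By the classical Young inequality $\|g\ast v\|_{L_2}\leq\|g\|_{L_1}\|v\|_{L_2}$, this gives $\|C\ast u\|_{H_{\rho,0}}\leq\|g\|_{L_1}\,|u|_{H_{\rho,0}}$. Since $C$ is supported on $[0,\infty[$, we have $\|g\|_{L_1}=\intop_{0}^{\infty}\|C(r)\|e^{-\rho r}\dd r$, and because $\rho\geq-\nu_0$ this is dominated by $\intop_{0}^{\infty}\|C(r)\|e^{\nu_0 r}\dd r$, which is the asserted bound. The hypothesis $\intop_{0}^{\infty}\|C(t)\|e^{\nu_0 t}\dd t<1$ then also guarantees this bound is finite (indeed less than one), so $C\ast$ extends continuously to all of $H_{\rho,0}(\mathbb{R};H)$.

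For the spectral representation, the idea is that convolution should diagonalize under the Fourier-Laplace transform into multiplication by (a multiple of) $\hat C$. I would verify \eqref{eq:spec_conv} by a direct computation on a dense class and then pass to the limit using the boundedness just established together with the unitarity of $\mathcal{L}_{\rho}$. Concretely, for $u$ in a suitable dense subspace (e.g.\ $C_c^\infty(\mathbb{R};H)$ or simple functions) one writes
\[
\left(\mathcal{L}_{\rho}(C\ast u)\right)(t)=\frac{1}{\sqrt{2\pi}}\intop_{\mathbb{R}}e^{-\i tx}e^{-\rho x}\intop_{\mathbb{R}}C(x-s)u(s)\dd s\dd x,
\]
and after justifying the interchange of integration via Fubini (legitimate once absolute integrability is checked, which follows from the $L_1$-bound on $g$ and integrability of $u$), the substitution $x=s+r$ factors the double integral into the product of $\frac{1}{\sqrt{2\pi}}\intop_{\mathbb{R}}e^{-\i t r}e^{-\rho r}C(r)\dd r=\hat C(t-\i\rho)$ and $\intop_{\mathbb{R}}e^{-\i t s}e^{-\rho s}u(s)\dd s=\sqrt{2\pi}\,(\mathcal{L}_{\rho}u)(t)$. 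This yields the factor $\sqrt{2\pi}\,\hat C(t-\i\rho)(\mathcal{L}_{\rho}u)(t)$ on the dense set.

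The passage to general $u\in H_{\rho,0}(\mathbb{R};H)$ is then routine: both sides of \eqref{eq:spec_conv} are continuous in $u$ (the left by the boundedness of $C\ast$ and the unitarity of $\mathcal{L}_{\rho}$, the right because $\hat C(\cdot-\i\rho)$ is bounded on the line $\Im z=-\rho$, which lies in $U$ precisely because $\rho\geq-\nu_0$), so equality on a dense subspace forces equality everywhere, up to almost-everywhere identification. I expect the main obstacle to be bookkeeping rather than conceptual: first, one must be careful that the complex argument $t-\i\rho$ stays in the domain $U=\{z\mid\Im z\leq\nu_0\}$ where $\hat C$ is defined and bounded, which is exactly the role of the constraint $\rho\geq-\nu_0$; and second, the Fubini interchange and the weak-integral interpretation of $\hat C$ require verifying absolute integrability of the $H$-valued double integral, for which the Young-type $L_1$-estimate on $\|C(\cdot)\|e^{-\rho\,\cdot}$ is precisely what is needed. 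Everything else is a standard density-and-continuity argument.
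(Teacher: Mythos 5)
Your argument is correct, and it is essentially the standard one: the weighted Young inequality with $g(r)=\|C(r)\|e^{-\rho r}$ for the operator-norm bound, and a Fubini computation on a dense class followed by density and continuity for the symbol formula \eqref{eq:spec_conv}. Note that the paper itself does not spell out a proof here --- it cites \cite[Lemma 3.1]{Trostorff2012_integro} for the boundedness and declares the spectral formula ``straightforward'' --- so your write-up supplies exactly the details being omitted; the only point to phrase carefully is that the double integral does not literally factor into a product of two scalar integrals (the inner one is operator-valued and acts on the vector-valued outer one), which is resolved by pairing with a fixed $y\in H$ and using the weak definition of $\hat C$, as you already indicate.
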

\begin{proof}
A proof for the first assertion can be found in \cite[Lemma 3.1]{Trostorff2012_integro}.
The proof of formula \prettyref{eq:spec_conv} is straight forward
and we omit it.
\end{proof}
According to \prettyref{lem:conv}, the operator $(1-C\ast)$ is boundedly
invertible in $H_{\rho,0}(\mathbb{R};H)$ for each $\rho\geq-\nu_{0}.$
Therefore, instead of considering \prettyref{eq:orig-integro} we
can study 
\begin{equation}
\left(\partial_{0,\rho}(1-C\ast)^{-1}+B\right)u=(1-C\ast)^{-1}f,\label{eq:modi_integro}
\end{equation}
or equivalently 
\[
\left(\partial_{0,\rho}(1-C\ast)^{-1}+c+A\right)u=(1-C\ast)^{-1}f.
\]
Note, that this as an evolutionary equation of the form \prettyref{eq:evo}
where $M$ is defined by 
\begin{equation}
M(z)=(1-\sqrt{2\pi}\hat{C}(-\i z^{-1}))^{-1}+cz,\quad\left(z\in\mathbb{C}\setminus B_{\mathbb{C}}\left(-\frac{1}{2\nu_{0}},\frac{1}{2\nu_{0}}\right)\right)\label{eq:material_law_integro}
\end{equation}
where we have used \prettyref{lem:conv}. The next lemma shows that
\prettyref{eq:pos_def_conv} already implies that the same condition
holds if one replaces $-\nu_{0}$ by $\rho$ for arbitrary $\rho\geq-\nu_{0}.$
\begin{lem}
\label{lem:one_nu_every_nu}Assume that $C$ satisfies the conditions
1., 2. and 3. above. Then for every $\rho\geq-\nu_{0}$ we have 
\[
t\Im\hat{C}(t-\i\rho)\leq0.
\]
\end{lem}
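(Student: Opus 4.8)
The plan is to reduce the operator inequality to a scalar statement about a bounded harmonic function and then propagate a sign from the boundary of the strip into its interior. Fix $x\in H$ and set $w_x(z):=\Im\langle x|\hat{C}(z)x\rangle$ for $z\in U$. Since $\hat{C}$ is analytic in $\interior U$ and bounded and continuous on $U$, the scalar function $z\mapsto\langle x|\hat{C}(z)x\rangle$ is holomorphic on $\interior U$, so $w_x$ is harmonic there, bounded, and continuous up to the boundary line $\{\Im z=\nu_0\}$. Because $\Im\hat{C}(z)$ is selfadjoint, the claimed estimate $t\,\Im\hat{C}(t-\i\rho)\le0$ is equivalent to $t\,w_x(t-\i\rho)\le0$ for every $x\in H$; hence it suffices to prove the latter scalar inequality for all $x$, all $t\in\mathbb{R}$ and all $\rho\ge-\nu_0$.

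First I would record a reflection identity. Directly from the definition of $\hat{C}$ and the selfadjointness of $C(\tau)$ (condition 1), using $\overline{e^{-\i z\tau}}=e^{\i\overline{z}\tau}=e^{-\i(-\overline{z})\tau}$, one obtains $\hat{C}(z)^{\ast}=\hat{C}(-\overline{z})$ for all $z\in U$; notice that only selfadjointness, not the commutativity in condition 2, is needed. Taking imaginary parts gives $w_x(-\overline{z})=-w_x(z)$, so $w_x$ is odd under reflection across the imaginary axis; in particular $w_x$ vanishes on $\{\Re z=0\}$. Testing condition 3 with $x$ reads $t\,w_x(t+\i\nu_0)\le0$, that is, $w_x\le0$ on the ray $\{\Im z=\nu_0,\ \Re z>0\}$ and, by oddness, $w_x\ge0$ on $\{\Im z=\nu_0,\ \Re z<0\}$. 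This is exactly the assertion in the boundary case $\rho=-\nu_0$.

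The core step is to carry these boundary signs into the interior. Since $w_x$ is bounded and harmonic on the half-plane $\{\Im z<\nu_0\}$, it coincides with the Poisson integral of its boundary values $g(s):=w_x(s+\i\nu_0)$. Writing $z=t+\i y$ with $a:=\nu_0-y>0$ and using that $g$ is odd, I would fold the Poisson integral onto $]0,\infty[$ to obtain
\[
w_x(z)=\frac{a}{\pi}\int_0^{\infty}g(s)\left(\frac{1}{(t-s)^2+a^2}-\frac{1}{(t+s)^2+a^2}\right)\,\mathrm{d}s.
\]
For $s>0$ the bracket equals $\dfrac{4ts}{\bigl((t-s)^2+a^2\bigr)\bigl((t+s)^2+a^2\bigr)}$, which has the sign of $t$, while $g(s)\le0$; hence the integrand has the opposite sign to $t$, and $t\,w_x(t+\i y)\le0$ for every $y<\nu_0$. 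Together with the boundary case this gives $t\,w_x(t-\i\rho)\le0$ for all $\rho\ge-\nu_0$. As $x\in H$ was arbitrary and $t\,\Im\hat{C}(t-\i\rho)$ is selfadjoint, the operator inequality $t\,\Im\hat{C}(t-\i\rho)\le0$ follows.

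I expect the only genuine obstacle to be the justification of the Poisson representation on the unbounded half-plane, that is, the fact that a bounded harmonic function there is recovered from its boundary values. This is precisely where the boundedness of $\hat{C}$ on $U$ is indispensable: without it a one-sidedly signed harmonic function such as $\Im z$ could obstruct the conclusion. An equivalent route avoiding the explicit folding is a Phragm\'en--Lindel\"of argument on the quadrant $\{\Re z>0\}\cap\interior U$, whose boundary splits into the imaginary axis (where $w_x=0$) and the ray $\{\Im z=\nu_0,\ \Re z>0\}$ (where $w_x\le0$); the boundedness of $w_x$ again furnishes the growth control needed to deduce $w_x\le0$ throughout the quadrant.
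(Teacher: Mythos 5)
Your argument is correct and complete. Note first that the paper itself contains no in-text proof of this lemma: it only points to [Lemma 3.7] of the integro-differential preprint, so you are supplying an argument where the text gives a citation. Your reduction to the scalar functions $w_x(z)=\Im\langle x|\hat{C}(z)x\rangle$ is legitimate because $\Im\hat{C}(z)$ is selfadjoint; the reflection identity $\hat{C}(z)^{\ast}=\hat{C}(-\overline{z})$ does follow from condition 1 alone (you are right that condition 2 is not needed here), and it yields both the vanishing of $w_x$ on the imaginary axis and the oddness of the boundary data $g(s)=w_x(s+\i\nu_0)$. The folded Poisson integral then transports the sign $g\le 0$ on $]0,\infty[$ into the half-plane $\{z\in\mathbb{C}\,|\,\Im z<\nu_0\}$, and the case $\rho=-\nu_0$ is condition 3 itself, so the conclusion holds for all $\rho\geq-\nu_0$. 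The one step that genuinely needs justification is exactly the one you flag: that a bounded harmonic function on a half-plane, continuous up to the boundary, is the Poisson integral of its boundary restriction. This is a standard consequence of the corresponding fact on the disk (weak-$\ast$ compactness plus the Cayley transform; the single boundary point at infinity has harmonic measure zero), and your Phragm\'en--Lindel\"of variant on the quadrant works equally well, since $w_x$ is bounded there and nonpositive on both boundary pieces. For comparison, the reference the paper defers to obtains the analogous transfer by a Fourier--Laplace convolution identity, expressing $\hat{C}(\cdot-\i\rho)$ as a convolution of $\hat{C}(\cdot+\i\nu_0)$ against the transform of the intervening exponential weight, whose real part is precisely the Poisson kernel; so the two arguments are close in substance, yours being phrased in the language of bounded harmonic functions rather than of explicit convolution formulas. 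What your route buys is a self-contained argument that does not manipulate the integral representation of $C$ directly; what it costs is the appeal to the half-plane Poisson representation theorem, which you correctly identify as the crux and for which you provide an adequate fallback.
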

\begin{proof}
The proof can be done analogously to the one of \cite[Lemma 3.7]{Trostorff2012_integro}.
\end{proof}
We now state our stability result for \prettyref{eq:modi_integro}.
\begin{thm}
\label{thm:decay_integro}Let $A:D(A)\subseteq H\to H$ be maximal
monotone and linear and let $c>0$. Moreover, let $C:[0,\infty[\to L(H)$
be weakly measurable, such that $t\mapsto\|C(t)\|$ is measurable
and there exists $\nu_{0}>0$ such that $\intop_{0}^{\infty}\|C(t)\|e^{\nu_{0}t}<1$
and $C$ satisfies the conditions 1., 2. and 3. from above. Then for
each $\rho>0$ the solution operator $\left(\overline{\partial_{0,\rho}(1-C\ast)^{-1}+c+A}\right)^{-1}$
exists and is exponentially stable with a stability rate $\nu_{1}\in]0,\nu_{0}]$
satisfying
\[
\nu_{1}\left(1-\intop_{0}^{\infty}\|C(s)\|e^{\nu_{1}s}\mbox{ d}s\right)^{-1}\leq c.
\]
\end{thm}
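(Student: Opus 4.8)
The plan is to verify that the material law $M$ defined in \prettyref{eq:material_law_integro} satisfies the three hypotheses (a)--(c) of Section 3 for a suitable stability rate $\nu_{1}\in]0,\nu_{0}]$, so that \prettyref{thm:stability} applies directly. Hypothesis (a), analyticity of $M$ on $\mathbb{C}\setminus B_{\mathbb{C}}[-\frac{1}{2\nu_{1}},\frac{1}{2\nu_{1}}]$, should follow from the analyticity of $\hat{C}$ in the interior of $U$ together with the boundedness $\intop_{0}^{\infty}\|C(t)\|e^{\nu_{0}t}\dd t<1$, which guarantees that $1-\sqrt{2\pi}\hat{C}$ stays invertible (its operator-norm distance from the identity is strictly less than $1$) and hence $(1-\sqrt{2\pi}\hat{C}(-\i z^{-1}))^{-1}$ is analytic in $z$ by the usual Neumann-series argument. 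The additive term $cz$ is entire, so this poses no difficulty.

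For hypothesis (b) I would compute $(1-\nu z)M(z(1-\nu z)^{-1})$ explicitly. The term $cz$ contributes $(1-\nu z)M_{0}$-type behaviour as before and is harmless; the genuine work is showing that $(1-\nu z)\bigl(1-\sqrt{2\pi}\hat{C}(-\i(1-\nu z)z^{-1})\bigr)^{-1}$ extends boundedly and analytically across the removable point $z=\nu^{-1}$. The key observation is that $z(1-\nu z)^{-1}$ maps $B_{\mathbb{C}}(r,r)$ appropriately so that the argument $-\i(1-\nu z)z^{-1}$ of $\hat{C}$ lands in the region where $\hat{C}$ is controlled; I expect the factor $(1-\nu z)$ to cancel the potential blow-up and yield a bounded analytic function, paralleling the delay computation in \prettyref{thm:stability_delay}.

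Hypothesis (c) is where I expect the main obstacle, and it is here that conditions 1., 2., 3.\ on $C$ and \prettyref{lem:one_nu_every_nu} enter decisively. For $z\in\mathbb{C}\setminus B_{\mathbb{C}}[-\frac{1}{2\nu},\frac{1}{2\nu}]$ write $z^{-1}=\i t+\rho$ with $\rho>-\nu$, so that $-\i z^{-1}=t-\i\rho$, and compute
\[
\Re z^{-1}M(z)=\Re\Bigl((\i t+\rho)(1-\sqrt{2\pi}\hat{C}(t-\i\rho))^{-1}\Bigr)+c.
\]
The selfadjointness and commutativity (conditions 1.\ and 2.)\ allow simultaneous diagonalization, reducing the operator estimate to a scalar spectral estimate, and \prettyref{lem:one_nu_every_nu} supplies the sign condition $t\,\Im\hat{C}(t-\i\rho)\leq0$. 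The delicate point is to show that the real part of $(\i t+\rho)(1-\sqrt{2\pi}\hat{C}(t-\i\rho))^{-1}$ stays bounded below, uniformly in $t$, by a quantity controlled through $\rho M_{0}$-style terms together with the constant $c$, so that the negative contribution on $\rho\in]-\nu,0[$ is dominated; this is exactly what forces the rate constraint $\nu_{1}(1-\intop_{0}^{\infty}\|C(s)\|e^{\nu_{1}s}\dd s)^{-1}\leq c$.

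Once (a)--(c) are established for every $\nu<\nu_{1}$ with this rate, the conclusion is immediate from \prettyref{thm:stability}: the solution operator of the reformulated equation \prettyref{eq:modi_integro} is exponentially stable with stability rate $\nu_{1}$. I would finally record that \prettyref{eq:modi_integro} and the original \prettyref{eq:orig-integro} have the same solutions because $(1-C\ast)$ is boundedly invertible on each $H_{\rho,0}(\mathbb{R};H)$ for $\rho\geq-\nu_{0}$ by \prettyref{lem:conv}, so the stability statement transfers back to the problem of interest.
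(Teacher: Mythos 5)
Your overall route is the same as the paper's: recast \prettyref{eq:orig-integro} as the evolutionary equation \prettyref{eq:modi_integro} with the material law \prettyref{eq:material_law_integro}, check hypotheses (a)--(c) of Section 3 for a rate $\nu_{1}$ satisfying $\nu_{1}(1-\intop_{0}^{\infty}\|C(s)\|e^{\nu_{1}s}\dd s)^{-1}\leq c$, and invoke \prettyref{thm:stability}; your treatment of (a), of (b) (where the composed argument of $\hat{C}$ simplifies to $-\i(z^{-1}-\nu)$ and the Neumann-series bound $\|\sqrt{2\pi}\hat{C}(-\i(z^{-1}-\nu))\|\leq\intop_{0}^{\infty}e^{\nu_{0}s}\|C(s)\|\dd s<1$ still applies --- note there is no blow-up for $(1-\nu z)$ to ``cancel''; it is simply a bounded factor on $B_{\mathbb{C}}(r,r)$), and of the transfer back to \prettyref{eq:orig-integro} all match the paper.

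The genuine gap is hypothesis (c), which you correctly identify as the heart of the matter but then only announce (``the delicate point is to show\dots'') rather than prove. The missing mechanism is the following. Writing $z^{-1}=\i t+\rho$ with $\rho>-\nu_{1}$, set $D\coloneqq|1-\sqrt{2\pi}\hat{C}(t-\i\rho)|^{-1}$; selfadjointness of $C(t)$ (condition 1.) gives $(1-\sqrt{2\pi}\hat{C}(t-\i\rho))^{-1}=D^{2}(1-\sqrt{2\pi}\hat{C}(-t-\i\rho))$, and commutativity (condition 2.) lets you rewrite this as the symmetrized product $D(1-\sqrt{2\pi}\hat{C}(-t-\i\rho))D$. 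Then
\[
\Re\langle z^{-1}M(z)x|x\rangle=\Re\left\langle\left.\left(\rho\Re\left(1-\sqrt{2\pi}\hat{C}(-t-\i\rho)\right)+\sqrt{2\pi}\,t\Im\hat{C}(-t-\i\rho)\right)Dx\right|Dx\right\rangle+c|x|^{2},
\]
and condition 3.\ together with \prettyref{lem:one_nu_every_nu} lets you discard the $t\Im\hat{C}$ term with the correct sign. For $\rho\geq0$ this already yields the bound $c$; for $\rho\in\left]-\nu,0\right[$ the estimate $\|D\Re(1-\sqrt{2\pi}\hat{C}(-t-\i\rho))D\|\leq\|(1-\sqrt{2\pi}\hat{C}(-t-\i\rho))^{-1}\|\leq(1-\intop_{0}^{\infty}e^{\nu_{1}s}\|C(s)\|\dd s)^{-1}$ is precisely what produces the rate constraint, giving the uniform lower bound $(\nu_{1}-\nu)(1-\intop_{0}^{\infty}e^{\nu_{1}s}\|C(s)\|\dd s)^{-1}>0$ for each fixed $\nu<\nu_{1}$. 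Your suggested alternative --- simultaneous diagonalization of the commuting selfadjoint family to reduce to a scalar estimate --- could in principle be made rigorous via the spectral representation of the generated commutative von Neumann algebra, but as written it is a heuristic, not an argument; without either that machinery or the explicit $D(\cdot)D$ computation, the claimed lower bound and the origin of the inequality $\nu_{1}(1-\intop_{0}^{\infty}\|C(s)\|e^{\nu_{1}s}\dd s)^{-1}\leq c$ remain unproved.
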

\begin{proof}
Let $\nu_{1}\in]0,\nu_{0}]$ such that $\nu_{1}\left(1-\intop_{0}^{\infty}\|C(s)\|e^{\nu_{1}s}\mbox{ d}s\right)^{-1}\leq c.$
We prove that $M$ given by \prettyref{eq:material_law_integro} satisfies
the hypotheses of Section 3. The assumption (a) is clear, since \foreignlanguage{english}{$\|\sqrt{2\pi}\hat{C}(-\i z^{-1})\|<1$}
for each $z\in\mathbb{C}\setminus B_{\mathbb{C}}\left[-\frac{1}{2\nu_{1}},\frac{1}{2\nu_{1}}\right]$.
Let now $r>0$ and $0\leq\nu<\nu_{1}.$ Then for $z\in B_{\mathbb{C}}(r,r)\setminus\{\nu^{-1}\}$
we compute 
\begin{align*}
(1-\nu z)M(z(1-\nu z)^{-1}) & =(1-\nu z)\left(\left(1-\sqrt{2\pi}\hat{C}\left(-\i z^{-1}(1-\nu z)\right)\right)^{-1}+cz(1-\nu z)^{-1}\right)\\
 & =(1-\nu z)\left(1-\sqrt{2\pi}\hat{C}\left(-\i(z^{-1}-\nu)\right)\right)^{-1}+cz,
\end{align*}
which has a holomorphic extension in $\nu^{-1}.$ Noting that for
each $z\in B_{\mathbb{C}}(r,r)$ we have that $z^{-1}=\i t+\rho$
for some $\rho>\frac{1}{2r},t\in\mathbb{R},$ we estimate 
\begin{align*}
\|\sqrt{2\pi}\hat{C}(-\i(z^{-1}-\nu))\| & \leq\intop_{0}^{\infty}e^{-\left(\rho-\nu\right)s}\|C(s)\|\mbox{ d}s\\
 & \leq\intop_{0}^{\infty}e^{\nu_{0}s}\|C(s)\|\mbox{ d}s<1.
\end{align*}
Hence, the extension of $(1-\nu z)M(z(1-\nu z)^{-1})$ to $B_{\mathbb{C}}(r,r)$
is indeed bounded for each $r>0,\nu\in[0,\nu_{1}[.$ We now show that
$M$ satisfies the assumption (c) on $\mathbb{C}\setminus B_{\mathbb{C}}\left[-\frac{1}{2\nu_{1}},\frac{1}{2\nu_{1}}\right]$.
We follow the strategy of the proof of \cite[Lemma 3.8]{Trostorff2012_integro}.
Let $z\in\mathbb{C}\setminus B_{\mathbb{C}}\left[-\frac{1}{2\nu_{1}},\frac{1}{2\nu_{1}}\right]$.
Note that then there exists $\rho>-\nu_{1}$ and $t\in\mathbb{R}$
such that $z^{-1}=\i t+\rho.$ We set $D\coloneqq|1-\sqrt{2\pi}\hat{C}(t-\i\rho)|^{-1}$,
which is well-defined according to \prettyref{lem:one_nu_every_nu}.
Note that 
\[
(1-\sqrt{2\pi}\hat{C}(t-\i\rho))^{-1}=D^{2}(1-\sqrt{2\pi}\hat{C}(-t-\i\rho)),
\]
where we have used assumption 1. Moreover, due to assumption 2., we
get that 
\[
D^{2}(1-\sqrt{2\pi}\hat{C}(-t-\i\rho))=D(1-\sqrt{2\pi}\hat{C}(-t-\i\rho))D.
\]
Thus, we obtain for $x\in H$ 
\begin{align*}
\Re\langle z^{-1}M(z)x|x\rangle & =\Re\left\langle \left.z^{-1}\left((1-\sqrt{2\pi}\hat{C}(-\i z^{-1}))^{-1}+cz\right)x\right|x\right\rangle \\
 & =\Re\left\langle \left.\left(\rho\Re\left(1-\sqrt{2\pi}\hat{C}(-t-\i\rho)\right)+\sqrt{2\pi}t\Im\hat{C}(-t-\i\rho)\right)Dx\right|Dx\right\rangle +c|x|^{2}\\
 & \geq\rho\Re\left\langle \left.D\left(1-\sqrt{2\pi}\hat{C}(-t-\i\rho)\right)Dx\right|x\right\rangle +c|x|^{2}.
\end{align*}
If $\rho$ is non-negative, the latter term can be estimated by $c$.
For negative $\rho$ we observe that 
\begin{align*}
\left\Vert D\Re\left(1-\sqrt{2\pi}\hat{C}(-t-\i\rho)\right)D\right\Vert  & \leq\left\Vert \left(1-\sqrt{2\pi}\hat{C}(-t-\i\rho)\right)^{-1}\right\Vert \\
 & \leq\frac{1}{1-\|\sqrt{2\pi}\hat{C}(-t-\i\rho)\|}\\
 & \leq\left(1-\intop_{0}^{\infty}e^{\nu_{1}s}\|C(s)\|\mbox{ d}s\right)^{-1}
\end{align*}
and hence, 
\begin{align*}
\rho\Re\left\langle \left.D\left(1-\sqrt{2\pi}\hat{C}(-t-\i\rho)\right)Dx\right|x\right\rangle +c|x|^{2} & \geq\left(\rho\left(1-\intop_{0}^{\infty}e^{\nu_{1}s}\|C(s)\|\mbox{ d}s\right)^{-1}+c\right)|x|^{2}\\
 & >\left(-\nu_{1}\left(1-\intop_{0}^{\infty}e^{\nu_{1}s}\|C(s)\|\mbox{ d}s\right)^{-1}+c\right)|x|^{2},
\end{align*}
which shows that $M$ satisfies hypothesis (c), according to the choice
of $\nu_{1}$. Thus, the assertion follows by \prettyref{thm:stability}.\end{proof}
\begin{rem}
\prettyref{thm:decay_integro} gives the exponential stability for
equation \prettyref{eq:modi_integro}. This also yields the exponential
stability of the original problem \prettyref{eq:orig-integro}, since
the operator $(1-C\ast)^{-1}$ leaves the space $H_{-\nu,0}(\mathbb{R};H)$
for all $\nu\leq\nu_{0}$ invariant. Indeed, observing that 
\[
e^{\nu m}(1-C\ast)^{-1}=\left(1-\left(e^{\nu m}C\right)\ast\right)^{-1}e^{\nu m}
\]
we obtain for $f\in H_{-\nu,0}(\mathbb{R};H)\cap H_{\rho,0}(\mathbb{R};H)$
\[
e^{\nu m}(1-C\ast)^{-1}f=\left(1-\left(e^{\nu m}C\right)\ast\right)^{-1}e^{\nu m}f\in L_{2}(\mathbb{R};H).
\]
\end{rem}

\end{document}